\documentclass{amsart}
\usepackage{latexsym,amssymb,amsmath,amsthm,amscd,graphicx}

\usepackage{esint} 
\setlength{\topmargin}{0in}
\setlength{\oddsidemargin}{0.35in}
\setlength{\evensidemargin}{0.35in}
\setlength{\textwidth}{5.7in}
\setlength{\textheight}{8.7in}
\setlength{\parskip}{3mm}

\usepackage{color}

\numberwithin{equation}{section}
\newtheorem{theorem}{Theorem}[section]
\newtheorem{lemma}[theorem]{Lemma}
\newtheorem{corollary}[theorem]{Corollary}

\theoremstyle{definition}

\newtheorem{definition}[theorem]{Definition}
\newtheorem{example}[theorem]{Example}

\theoremstyle{remark}

\usepackage{url,hyperref}

\usepackage{color}
\definecolor{dblue}{rgb}{0,0,0.45}
\definecolor{red}{rgb}{0.7,0,0}

\begin{document}
%
\title[Paraproduct in Besov--Morrey spaces]{Paraproduct in Besov--Morrey spaces}
%
%
\author{Yoshihiro Sawano}
%
%
%

\maketitle              

\begin{abstract}
Recently it turned out that the paraproduct
plays the key role in some highly singular partial differential equations.
In this note the counterparts for Besov--Morrey spaces
are obtained.
This note is organized in a self-contained manner.
\keywords{Besov--Morrey spaces, paraproduct}
\end{abstract}

\section{Introduction}\label{s1}

In this note we investigate the boundedness property
of the pointwise multiplier of the functions
in H\"{o}lder--Zygmund spaces and 
Besov--Morrey spaces
including the commutators.
Starting from the seminal papers \cite{GIP15,GIP16,Hairer14},
we investigate these operators from the viewpoint of harmonic analysis.

To describe our first result,
we recall some notation.
First, we use the following convention on balls
in ${\mathbf R}^n$
here and below:
We denote by $B(x,r)$
the {\it ball centered at $x$ of radius $r$}.
Namely, we write
\[
B(x,r) \equiv
\{y \in {\mathbf R}^n\,:\, |x-y|<r\}
\]
when $x \in {\mathbf R}^n$
and $r>0$.
Given a ball $B$,
we denote by $c(B)$ its {\it center} 
and by $r(B)$ its {\it radius}.
We write $B(r)$ instead of $B(o,r)$,
where $o\equiv (0,0,\ldots,0)$.
Keeping this definition of balls in mind,
we define Morrey spaces.
Let $1 \le q \le p<\infty$.
Define the {\it Morrey norm}
$\|\cdot\|_{{\mathcal M}^p_q}$ by
\[
\| f \|_{{\mathcal M}^p_q}
\equiv
\sup_{x \in {\mathbf R}^n, r>0}
|B(x,r)|^{\frac{1}{p}-\frac{1}{q}}\|f\|_{L^q(B(x,r))}
\]
for a measurable function $f$.
The {\it Morrey space}
${\mathcal M}^p_q({\mathbf R}^n)$
is the set of all the measurable functions $f$
for which
$\| f \|_{{\mathcal M}^p_q}$
is finite.
We move on to the definition of Besov--Morrey spaces.
Choose $\psi \in C^\infty_{\rm c}({\mathbf R}^n)$
so that
\begin{equation}\label{eq:180812-3}
\chi_{B(\frac65)} \le \psi \le \chi_{B(\frac32)}.
\end{equation}
We write
\[
\varphi_0(\xi)=\psi(\xi), \quad
\varphi_j(\xi)=\psi(2^{-j}\xi)-\psi(2^{-j+1}\xi), \quad
\psi_j(\xi)=\psi(2^{-j}\xi)
\]
for $j \in {\mathbf N}$ and $\xi \in {\mathbf R}^n$.

For $f \in L^1({\mathbf R}^n)$,
define
the {\it Fourier transform} and
the {\it inverse Fourier transform}
by{\rm:}
\begin{eqnarray*}
{\mathcal F}f(\xi)
\equiv
(2\pi)^{-\frac{n}{2}}
\int_{{\mathbf R}^n} f(x){\rm e}^{-i x \cdot \xi} {\rm d}x, \quad
{\mathcal F}^{-1}f(x)
\equiv
(2\pi)^{-\frac{n}{2}}
\int_{{\mathbf R}^n} f(\xi){\rm e}^{i x \cdot \xi} {\rm d}\xi.
\end{eqnarray*}
Here and below we write
$\theta(D)f\equiv {\mathcal F}^{-1}[\theta \cdot {\mathcal F}f]$
for $\theta \in {\mathcal S}({\mathbf R}^n)$ 
and $f \in {\mathcal S}'({\mathbf R}^n)$.
It is known that
$\theta(D)f \in {\mathcal S}'({\mathbf R}^n)\cap L^1_{\rm loc}({\mathbf R}^n)$
and it satisfies
\[
\theta(D)f(x)=(2\pi)^{-\frac{n}{2}}\langle f,{\mathcal F}^{-1}\theta(x-\cdot) \rangle
\]
for all $x \in {\mathbf R}^n$.
We define
\[
\| f \|_{{\mathcal N}^s_{pqr}}
\equiv
\left(
\sum_{j=0}^\infty 
(2^{j s}\| \varphi_j(D)f \|_{{\mathcal M}^p_q})^r
\right)^{\frac1r}
\]
for $f \in {\mathcal S}'({\mathbf R}^n)$.

Let $1 \le q \le p<\infty$,
$1 \le r\le \infty$
and $s \in \mathbf{R}$.
The space
${\mathcal N}^s_{pqr}({\mathbf R}^n)$,
which we call the {\it Besov--Morrey space},
is the set
of all
$f \in {\mathcal S}'({\mathbf R}^n)$
for which
the norm
$\| f \|_{{\mathcal N}^s_{pqr}}$
is
finite.
The parameter $s$ describes the differential property
in terms of Morrey spaces
as is indicated by the relations
${\mathcal N}^{s+\varepsilon}_{pqr}({\mathbf R}^n)
\subset
{\mathcal N}^s_{pqr}({\mathbf R}^n)$
and
$\partial_j:{\mathcal N}^{s+1}_{pqr}({\mathbf R}^n)
\subset
{\mathcal N}^s_{pqr}({\mathbf R}^n)$
for all $\varepsilon>0$ and $j=1,2,\ldots,n$.
It is also clear from the triangle inequality in ${\mathcal M}^{p}_{q}({\mathbf R}^n)$  that
${\mathcal N}^0_{pq1}({\mathbf R}^n) \subset {\mathcal M}^{p}_{q}({\mathbf R}^n)$.
The main results in this note are the following:
\begin{theorem}\label{thm:main1}
Let 
$1 \le q_1 \le p_1<\infty$,
$1 \le q_2 \le p_2<\infty$,
$1 \le q \le p<\infty$,
$1 \le r \le \infty$,
and
$s>0$.
Assume that
\[
\frac1p=\frac{1}{p_1}+\frac{1}{p_2}, \quad
\frac1q=\frac{1}{q_1}+\frac{1}{q_2}.
\]
Then
for $f \in {\mathcal N}^{s}_{p_1q_1r}({\mathbf R}^n)$
and $g \in {\mathcal N}^{s}_{p_2q_2r}({\mathbf R}^n)$
the product
$f \cdot g \in {\mathcal N}^{s}_{p q r}({\mathbf R}^n)$ makes sense
and satisfies
\[
\| f \cdot g\|_{{\mathcal N}^{s}_{p q r}}
\le C
\|f\|_{{\mathcal N}^{s}_{p_1q_1r}}
\|g\|_{{\mathcal N}^{s}_{p_2q_2r}}.
\]
\end{theorem}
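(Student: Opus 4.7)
The plan is to run the classical Bony paraproduct argument, but carrying everything through on the Morrey scale ${\mathcal M}^p_q$ in place of $L^p$. Set $f_j=\varphi_j(D)f$, $g_j=\varphi_j(D)g$, and $S_jh=\sum_{k\le j}\varphi_k(D)h$. Then formally
\[
f\cdot g
=\sum_{j}f_j\, S_{j-2}g
+\sum_{k}S_{k-2}f\, g_k
+\sum_{|j-k|\le 2}f_j g_k
=\Pi_1(f,g)+\Pi_2(f,g)+\Pi_3(f,g),
\]
and the goal is to bound each piece in ${\mathcal N}^{s}_{pqr}$.

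The first ingredient I need is a H\"older-type inequality on Morrey spaces, namely $\|FG\|_{{\mathcal M}^p_q}\le \|F\|_{{\mathcal M}^{p_1}_{q_1}}\|G\|_{{\mathcal M}^{p_2}_{q_2}}$ under the assumed relations $1/p=1/p_1+1/p_2$, $1/q=1/q_1+1/q_2$; this follows directly from H\"older on each ball $B(x,r)$. Combined with the Fourier support of each summand of $\Pi_1$ (namely, $f_j\cdot S_{j-2}g$ is supported in an annulus of size $2^j$), only $O(1)$ values of $j$ contribute to $\varphi_\ell(D)\Pi_1(f,g)$, so
\[
\|\varphi_\ell(D)\Pi_1(f,g)\|_{{\mathcal M}^p_q}
\lesssim \sum_{|j-\ell|\le N}\|f_j\|_{{\mathcal M}^{p_1}_{q_1}}\,\|S_{j-2}g\|_{{\mathcal M}^{p_2}_{q_2}}.
\]
To close this, I use $s>0$ to get a uniform-in-$j$ bound $\|S_{j-2}g\|_{{\mathcal M}^{p_2}_{q_2}}\le \sum_{k\le j-2}\|g_k\|_{{\mathcal M}^{p_2}_{q_2}}\lesssim \|g\|_{{\mathcal N}^{s}_{p_2q_2r}}$ via H\"older in the summation index (this is where $s>0$ enters). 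Multiplying by $2^{\ell s}$, taking $\ell^r$, and using shift invariance of $\ell^r$ gives the desired bound on $\Pi_1$; $\Pi_2$ is symmetric.

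The resonant term $\Pi_3$ is the main obstacle, since $f_jg_k$ with $|j-k|\le 2$ is Fourier-supported only in the ball $|\xi|\lesssim 2^j$, so arbitrarily high $j$ can contribute to the $\ell$-th frequency block. I get
\[
\|\varphi_\ell(D)\Pi_3(f,g)\|_{{\mathcal M}^p_q}
\lesssim \sum_{j\ge \ell-N}\|f_j\|_{{\mathcal M}^{p_1}_{q_1}}\|g_j\|_{{\mathcal M}^{p_2}_{q_2}}.
\]
Writing $u_j=2^{js}\|f_j\|_{{\mathcal M}^{p_1}_{q_1}}$, $v_j=2^{js}\|g_j\|_{{\mathcal M}^{p_2}_{q_2}}$, the weighted block becomes $2^{\ell s}\|\varphi_\ell(D)\Pi_3\|_{{\mathcal M}^p_q}\lesssim \sum_{j\ge \ell-N}2^{(\ell-j)s}\cdot 2^{-js}u_jv_j$, which is a convolution in $\ell$ with the $\ell^1$ kernel $\min(1,2^{(\ell-j)s})$ (convergent precisely because $s>0$). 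A discrete Young/Minkowski then reduces matters to estimating $\|2^{-js}u_jv_j\|_{\ell^r_j}\le \|v\|_{\ell^\infty}\|2^{-js}u_j\|_{\ell^r}\lesssim \|v\|_{\ell^r}\|u\|_{\ell^\infty}\lesssim \|f\|_{{\mathcal N}^{s}_{p_1q_1r}}\|g\|_{{\mathcal N}^{s}_{p_2q_2r}}$.

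Finally, I should briefly justify that the formal sum defining $f\cdot g$ converges in ${\mathcal S}'$: the above estimates show absolute convergence in ${\mathcal M}^p_q\subset L^q_{\rm loc}$ of each of the three pieces (and together of their frequency-localized blocks). The main conceptual obstacle is thus the resonant sum, where the combination of the Morrey H\"older inequality, the Fourier support constraint, and the condition $s>0$ must align; everything else is bookkeeping borrowed from the classical Besov case.
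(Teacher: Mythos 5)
Your proposal follows essentially the same route as the paper: the Bony decomposition into two paraproducts and a resonant part, the H\"older inequality on the Morrey scale, the Fourier-support localization of each block, the uniform bound $\sup_j\|S_{j}g\|_{{\mathcal M}^{p_2}_{q_2}}\lesssim\|g\|_{{\mathcal N}^s_{p_2q_2r}}$ from $s>0$ for the paraproducts, and the summable tail $2^{(\ell-j)s}$, $j\ge\ell-N$, again from $s>0$, for the resonant part (the paper packages this last step as passing through ${\mathcal N}^{2s}_{pqr}$ with auxiliary indices $2r$, but the computation is the same). The only blemish is an off-by-one in your decomposition --- with $S_{j-2}$ in the paraproducts and $|j-k|\le 2$ in the resonant sum the pairs $|j-k|=2$ are counted twice, so the three pieces do not sum to $f\cdot g$; this is harmless for the estimates and fixed by taking $|j-k|\le 1$.
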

Theorem \ref{thm:main1} is an extension of the inequality
\[
\| f \cdot g\|_{{\mathcal M}^{p}_{q}}
\le C
\|f\|_{{\mathcal M}^{p_1}_{q_1}}
\|g\|_{{\mathcal M}^{p_2}_{q_2}}
\]
for 
$f \in {\mathcal M}^{p_1}_{q_1}({\mathbf R}^n)$
and
$g \in {\mathcal M}^{p_2}_{q_2}({\mathbf R}^n)$.
The proof of Theorem \ref{thm:main1} hinges on the paraproduct
introduced by Bony \cite{Bony81}.
Let $f,g \in {\mathcal S}'({\mathbf R}^n)$.
The (right) paraproduct
$f \preceq g$
is defined to be
\[
f \preceq g=\sum_{j=2}^\infty \psi_{j-2}(D)f \cdot \varphi_j(D)g,
\]
while the (left) paraproduct
$f \succeq g$
is defined to be
\[
f \succeq g=\sum_{j=2}^\infty \varphi_j(D)f \cdot \psi_{j-2}(D)g.
\]
Furthermore, the resonant operator
$f \odot g$ is defined by
\[
f \odot g=
\sum_{j=0}^\infty \varphi_j(D)f \cdot \varphi_j(D)g
+
\sum_{j=1}^\infty \varphi_{j-1}(D)f \cdot \varphi_j(D)g
+
\sum_{j=1}^\infty \varphi_j(D)f \cdot \varphi_{j-1}(D)g.
\]
We need some assumptions on $f$ and $g$
to justify these definitions.
These three linear operators are key linear operators
used in the proof of Theorem \ref{thm:main1}.

Another aim of this paper is to extend
the results used in \cite{GIP15,Hairer14},
which also use these operators, to the Morrey setting:
\begin{theorem}\label{thm:main2}
Assume that the parameters $\alpha,\beta,s$ satisfy
\[
0<\alpha \le 1, \quad
s+\beta<0<
s+\alpha+\beta.
\]
Then for $f \in {\rm Lip}_\alpha({\mathbf R}^n)$,
$g \in {\mathcal C}^\beta({\mathbf R}^n)$
and 
$h \in {\mathcal N}^{s}_{p q r}({\mathbf R}^n)$
\[
\|(f \preceq g) \odot h-f(g \odot h)\|_{{\mathcal N}^{s+\alpha+\beta}_{p q r}}
\le C
\|f\|_{{\rm Lip}^\alpha}
\|g\|_{{\mathcal C}^\beta}
\|h\|_{{\mathcal N}^{s}_{p q r}}.
\]
\end{theorem}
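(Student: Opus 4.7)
The plan is to expand both $(f\preceq g)\odot h$ and $f(g\odot h)$ in Littlewood--Paley blocks, identify the difference as a triple sum indexed by $(\ell,i,m)$ over the frequencies of $f$, $g$, $h$, and then bound this sum in ${\mathcal N}^{s+\alpha+\beta}_{pqr}$ by combining H\"older's inequality on Morrey spaces with the decay of the blocks of $f$ and $g$ dictated by their H\"older regularity. Writing $\delta_j := \varphi_j(D)$ and $S_j := \psi_j(D)$, each piece $S_{j-2}f\cdot \delta_j g$ is Fourier-supported in an annulus at scale $2^j$, so $\delta_i(S_{j-2}f\cdot\delta_j g)$ vanishes unless $|i-j|\le C$. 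This lets me replace $\delta_i(f\preceq g)$ by $S_{i-2}f\cdot\delta_i g$ modulo a remainder $R$ whose blocks obey the same Fourier localization.

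Combining with the telescoping $f = S_{i-2}f + \sum_{\ell \ge i-1}\delta_\ell f$, subtraction yields
\[
(f\preceq g)\odot h - f(g\odot h) \;=\; -\sum_{|i-m|\le 1}\;\sum_{\ell\ge i-1}\delta_\ell f\cdot \delta_i g\cdot \delta_m h \;+\; R.
\]
I then apply $\delta_n$ and analyse the Fourier support of each triple product: when $\ell \le i+C$ it sits in a ball of radius $\sim 2^i$, forcing $n\le i+C$; when $\ell \ge i+C$, the ``high$+$low$=$high'' principle places it in an annulus at scale $2^\ell$, forcing $n\approx \ell$. Term by term,
\[
\|\delta_\ell f\cdot \delta_i g\cdot \delta_m h\|_{{\mathcal M}^p_q} \;\le\; \|\delta_\ell f\|_{L^\infty}\|\delta_i g\|_{L^\infty}\|\delta_m h\|_{{\mathcal M}^p_q} \;\lesssim\; 2^{-\ell\alpha-i\beta-ms}\, c_m\, \|f\|_{{\rm Lip}^\alpha}\|g\|_{{\mathcal C}^\beta}\|h\|_{{\mathcal N}^s_{pqr}},
\]
where $(c_m)\in\ell^r$ encodes the ${\mathcal N}^s_{pqr}$-norm of $h$.

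Summing over the two regimes gives, respectively, a tail $\sum_{i\ge n-C}2^{-(i-n)(s+\alpha+\beta)}c_i$ (convergent because $s+\alpha+\beta>0$) and a tail $\sum_{m\le n-C}2^{(n-m)(s+\beta)}c_m$ (convergent because $s+\beta<0$); each is a discrete convolution of $(c_m)\in\ell^r$ with a summable geometric kernel, so Young's inequality produces the desired $\ell^r$-bound on $(2^{n(s+\alpha+\beta)}\|\delta_n[\cdots]\|_{{\mathcal M}^p_q})_n$. Both threshold hypotheses are thus exactly used. The main obstacle is the Fourier bookkeeping in the first step: tracking precisely which $(\ell,i,j,m)$-tuples contribute to each block of the commutator, and verifying that the remainder $R$ obeys the same localizations so that it satisfies the identical two-regime estimate with no loss. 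The passage from the classical $L^p$ theory to the Morrey setting is essentially cost-free once one notes the pointwise multiplier bound $\|fg\|_{{\mathcal M}^p_q}\le\|f\|_{L^\infty}\|g\|_{{\mathcal M}^p_q}$.
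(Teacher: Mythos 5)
Your skeleton matches the paper's proof (and \cite[Lemma 2.4]{GIP15}): reduce to the blocks of the resonant pairing, split $f$ into the low-frequency part $\psi_{i-2}(D)f$ plus the tail $\sum_{\ell\ge i-1}\varphi_\ell(D)f$, and estimate the resulting triple products $\varphi_\ell(D)f\cdot\varphi_i(D)g\cdot\varphi_m(D)h$ in your two regimes. That part is sound: the regime $n\approx\ell\gg i$ is exactly where $s+\beta<0$ is used, and the ball-supported regime is the content of Lemma~\ref{lem:180812-6}, which is where $s+\alpha+\beta>0$ enters.

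The gap is in the remainder $R_i=\varphi_i(D)[f\preceq g]-\psi_{i-2}(D)f\cdot\varphi_i(D)g$. You assert that $R$ ``obeys the same Fourier localization'' and therefore ``satisfies the identical two-regime estimate with no loss,'' i.e.\ you treat its control as Fourier bookkeeping. It is not. Support considerations only place $R_i$ at frequencies $\lesssim 2^i$ and yield the crude bound $\|R_i\|_{L^\infty}=O(2^{-i\beta}\|f\|_{L^\infty}\|g\|_{{\mathcal C}^\beta})$; feeding this into your scheme produces the factor $2^{n(s+\alpha+\beta)}\cdot 2^{-n(s+\beta)}=2^{n\alpha}$, which diverges (and cannot be rescued, since by hypothesis $s+\beta<0$ while only $s+\alpha+\beta$ is positive). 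What is needed, and what your proposal never states, is the gain of $2^{-i\alpha}$ coming from a genuine cancellation, namely the commutator estimate
\[
\|\varphi_j(D)[F\cdot G]-F\,\varphi_j(D)G\|_{L^\infty}\le C\,2^{-j\alpha}\|F\|_{{\rm Lip}^\alpha}\|G\|_{L^\infty}
\]
(Lemma~\ref{lem:180812-1}, upgraded to the paraproduct version in Lemma~\ref{lem:180812-7}). Its proof is not a support argument: one writes the difference as
\[
(2\pi)^{-\frac n2}\int_{{\mathbf R}^n}2^{jn}{\mathcal F}^{-1}\varphi(2^j(x-y))\,(F(y)-F(x))\,G(y)\,{\rm d}y
\]
and pairs the increment $F(y)-F(x)$ against the finite moment $\int|z|^\alpha|{\mathcal F}^{-1}\varphi(z)|\,{\rm d}z$. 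Once this lemma is supplied and applied to $R_i$ (after which the low-frequency localization of $R_i$ together with Lemma~\ref{lem:180812-6} and $s+\alpha+\beta>0$ finishes that piece), your argument becomes essentially the paper's proof, the only inessential difference being where the frequency cut in $f$ is placed ($2^{j+4}$ in the paper versus $2^{i-2}$ in your telescoping).
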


This result is a counterpart to \cite[Lemma 2.4]{GIP15}.

Here we briefly recall how 
Besov--Morrey spaces emerged.
See \cite{Sawano18,YSY10-2} for an exhaustive account.
The first paper dates back to 1984.
In \cite{Netrusov84}
Netrusov considered Besov--Morrey spaces.
Later on Kozono and Yamazaki investigated Besov--Morrey spaces
and applied them to the Navier--Stokes equations
\cite{KoYa94}.
Mazzucato expanded this application more
in \cite{Mazzucato02}.
Decompositions of Besov--Morrey spaces
can be found in
\cite{Mazzucato01,SaTa07,TaXu05}.
After that Yang and Yuan defined
Besov-type spaces
and 
Triebel--Lizorkin-type spaces
in \cite{YaYu08-2,YaYu10-1}.
A close relation between these spaces is pointed out
in \cite{SYY10}.
Recently more and more is investigated.
For example, Haroske and Skrzypczak
investigated embedding relation of Besov--Morrrey spaces
\cite{HaSk14}.
One of the important consequence
of definining the Besov--Morrey spaces
is that we have the embedding
\[
{\mathcal N}^{s}_{p q\infty}({\mathbf R}^n)
\hookrightarrow
{\mathcal C}^{s-\frac{n}{p}}({\mathbf R}^n)
\]
for $s>\frac{n}{p}$.
See \cite{SST09}.

We organize this paper as follows:
Section \ref{s2} is devoted to collecting some preliminary facts.
In Section \ref{s3} we prove Theorem \ref{thm:main1}
and 
in Section \ref{s4} we prove Theorem \ref{thm:main2}.
\section{Preliminaries}\label{s2}

\subsection{Schwartz distributions and the Fourier transform}

Let us recall the notation of multi-indexes
to define the Schwartz space ${\mathcal S}({\mathbf R}^n)$.
By \lq \lq a multi-index",
we mean an element in ${\mathbf N}_0{}^n\equiv \{0,1,2,\ldots\}^n$.
In this paper
a tacit understanding is that
all functions assume their value in ${\mathbf C}$.
For a multi-index
$\alpha=(\alpha_1,\alpha_2,\ldots,\alpha_n) \in {\mathbf N}_0{}^n$
$x=(x_1,x_2,\ldots,x_n) \in {\mathbf R}^n$, we define
$x^\alpha \equiv x_1{}^{\alpha_1}x_2{}^{\alpha_2}\cdots x_n{}^{\alpha_n}$.
For
a multi-index
$\beta=(\beta_1,\beta_2,\ldots,\beta_n) \in {\mathbf N}_0{}^n$
and
$f \in C^{\infty}({\mathbf R}^n)$,
we set
\[
\partial^\beta f
\equiv
\left(\frac{\partial}{\partial x_1}\right)^{\beta_1}
\left(\frac{\partial}{\partial x_2}\right)^{\beta_2}
\ldots
\left(\frac{\partial}{\partial x_n}\right)^{\beta_n}f.
\]
\begin{definition}[Schwartz function space ${\mathcal S}({\mathbf R}^n)$]
\label{defi:cS}
For multi-indexes $\alpha,\beta \in {\mathbf N}_0{}^n$
and a function $\varphi$,
write
$\varphi_{(\alpha,\beta)}(x)\equiv x^\alpha \partial^\beta \varphi(x)$,
$x \in {\mathbf R}^n$
temporarily.
The {\it Schwartz function space ${\mathcal S}({\mathbf R}^n)$}
is the set of all the functions satisfying
\[
{\mathcal S}({\mathbf R}^n)\equiv
\bigcap_{\alpha,\beta \in {\mathbf N}_0{}^n}
\left\{\varphi \in C^{\infty}({\mathbf R}^n) \, : \,
\varphi_{(\alpha,\beta)} \in L^\infty({\mathbf R}^n)
\right\}.
\]
The elements in ${\mathcal S}({\mathbf R}^n)$
are called the {\it test functions}.
\index{test functions@test functions}
\index{Schwartz function space@Schwartz function space ${\mathcal S}({\mathbf R}^n)$}
\end{definition}
Denote by
${\mathcal S}'({\mathbf R}^n)$
the set of all continuous linear mappings
from ${\mathcal S}({\mathbf R}^n)$ to ${\mathbf C}$.
Denote by $\langle f,\varphi \rangle$
the value of $f$ evaluated at $\varphi$;
$\langle f,\varphi \rangle \equiv f(\varphi)$.

Note that
${\mathcal S}({\mathbf R}^n)$ is embedded into
$L^1({\mathbf R}^n)$
and that
${\mathcal F}$ mapsto ${\mathcal S}({\mathbf R}^n)$
isomorphically to itself.
Thus by duality
${\mathcal F}$ mapsto ${\mathcal S}'({\mathbf R}^n)$
isomorphically to itself.

A function $h \in C^{\infty}({\mathbf R}^n)$
is said to {\it have at most polynomial growth at infinity},
if for all $\alpha \in {\mathbf N}_0{}^n$,
there exist
$C_\alpha>0$
and
$N_\alpha>0$
such that{\rm:}
\begin{equation}\label{eq:090227-1009}
|\partial^\alpha h(x)| \le
C_\alpha
\langle x \rangle^{N_\alpha}, \quad
x \in {\mathbf R}^n.
\end{equation}
Here we are interested in the inclusion:
\begin{equation}\label{eq:15.13}
{\rm supp}({\mathcal F}[f \cdot g]) \subset {\rm supp}({\mathcal F}f)+{\rm supp}({\mathcal F}g)
\end{equation}
for $f,g \in {\mathcal S}'({\mathbf R}^n)$
having  at most polynomial growth at infinity.
Usually we assume that
${\mathcal F}f$ is compactly supported.

Let
$\Omega$
be a bounded set in ${\mathbf R}^n$.
Denote 
by
${\mathcal S}'_\Omega({\mathbf R}^n)$
the set of all distributions
whose Fourier transform is contained
in the closure $\overline{\Omega}$.
Define
${\mathcal S}_\Omega({\mathbf R}^n)
\equiv
{\mathcal S}'_\Omega({\mathbf R}^n)
\cap {\mathcal S}({\mathbf R}^n)$.

\begin{lemma}\label{lem:180812-2}
\
\begin{enumerate}
\item
For all 
$F\in C^\infty_{\rm c}({\mathbf R}^n)$,
$G \in {\mathcal S}({\mathbf R}^n)$,
\begin{equation}\label{eq:180612-1}
{\rm supp}(F*G) \subset {\rm supp}(F)+{\rm supp}(G).
\end{equation}
\item
Let $K$ be a compact set.
Then
for all 
$f\in {\mathcal S}_K({\mathbf R}^n)$,
$g \in {\mathcal S}({\mathbf R}^n)$,
$(\ref{eq:15.13})$ holds.
\end{enumerate}
\end{lemma}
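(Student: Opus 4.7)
For part (1), the plan is to argue directly from the integral definition
\[
(F*G)(x) = \int_{{\mathbf R}^n} F(y) G(x-y)\, dy.
\]
The key observation is that whenever $x \notin {\rm supp}(F)+{\rm supp}(G)$, the integrand vanishes identically in $y$: for any $y \in {\rm supp}(F)$ we must have $x-y \notin {\rm supp}(G)$, since otherwise $x = y+(x-y)$ would exhibit $x$ as an element of ${\rm supp}(F)+{\rm supp}(G)$. Hence $(F*G)(x) = 0$ on the complement of ${\rm supp}(F)+{\rm supp}(G)$. To upgrade this from pointwise vanishing to an inclusion of supports, I need ${\rm supp}(F)+{\rm supp}(G)$ to be closed: this follows because $F \in C^\infty_{\rm c}({\mathbf R}^n)$ forces ${\rm supp}(F)$ to be compact, and the sum of a compact set and a closed set in ${\mathbf R}^n$ is closed by a short sequential compactness argument. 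Since ${\rm supp}(F*G)$ is the closure of the non-vanishing set of $F*G$, the desired inclusion (\ref{eq:180612-1}) follows.

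For part (2), the strategy is to reduce to (1) by moving to the Fourier side. Since $f \in {\mathcal S}_K({\mathbf R}^n) \subset {\mathcal S}({\mathbf R}^n)$ with ${\rm supp}({\mathcal F}f) \subset K$ and $K$ compact, the function ${\mathcal F}f$ is a Schwartz function whose support lies in a compact set, so ${\mathcal F}f \in C^\infty_{\rm c}({\mathbf R}^n)$. The convolution theorem for Schwartz functions, together with ${\mathcal F}g \in {\mathcal S}({\mathbf R}^n)$, yields
\[
{\mathcal F}[f \cdot g] = (2\pi)^{-n/2}\, {\mathcal F}f * {\mathcal F}g.
\]
Now I apply part (1) with $F={\mathcal F}f$ and $G={\mathcal F}g$, which produces
\[
{\rm supp}({\mathcal F}[f\cdot g]) = {\rm supp}({\mathcal F}f * {\mathcal F}g) \subset {\rm supp}({\mathcal F}f) + {\rm supp}({\mathcal F}g),
\]
that is, (\ref{eq:15.13}).

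No step is a substantial obstacle; the only genuinely delicate point is the closedness of the algebraic sum ${\rm supp}(F)+{\rm supp}(G)$, which is where the hypothesis $F \in C^\infty_{\rm c}$ is used. Without compactness of ${\rm supp}(F)$ the sum of two closed sets in ${\mathbf R}^n$ need not be closed, so the argument would have to be rephrased (for example, via approximation or via the general theory of compactly supported distributions) if one wished to relax the hypotheses. Under the stated assumptions, however, both parts go through essentially by this elementary plan.
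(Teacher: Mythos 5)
Your proposal is correct and follows essentially the same route as the paper: part (1) by inspecting the integrand of the convolution and using that the sum of the compact set ${\rm supp}(F)$ and the closed set ${\rm supp}(G)$ is closed, and part (2) by reducing to part (1) through the identity ${\mathcal F}[f\cdot g]=(2\pi)^{-n/2}{\mathcal F}f*{\mathcal F}g$ with ${\mathcal F}f\in C^\infty_{\rm c}({\mathbf R}^n)$. Your remark on where compactness is genuinely needed is accurate and matches the role it plays in the paper's argument.
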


\begin{proof}
\
\begin{enumerate}
\item
The proof of (\ref{eq:180612-1}) is standard:
Simply write out the convolution $f*g$
in full in terms of the integral to have
\begin{eqnarray*}
\{x \in {\mathbf R}^n\,:\,F*G(x) \ne 0\}
&\subset&
\{x \in {\mathbf R}^n\,:\,F(x) \ne 0\}
+
\{x \in {\mathbf R}^n\,:\,G(x) \ne 0\}\\
&\subset&
{\rm supp}(F)+{\rm supp}(G).
\end{eqnarray*}
Since ${\rm supp}(F)$ is compact and ${\rm supp}(G)$
is closed,
${\rm supp}(F)+{\rm supp}(G)$
is a closed set.
Thus, taking the closure of the above inclusion,
we conclude that (\ref{eq:180612-1}) holds.
\item
Inclusion
$(\ref{eq:15.13})$
is a consequence of
${\mathcal F}[f \cdot g]=(2\pi)^{-\frac{n}{2}}{\mathcal F}f * {\mathcal F}g$
and the fact that
${\mathcal F}$ maps ${\mathcal S}({\mathbf R}^n)$
isomorphically.
\end{enumerate}
\end{proof}

Define the convolution
\index{convolution@convolution}
$f*g$ by
$\displaystyle
f*g(x)\equiv 
\int_{{\mathbf R}^n}f(x-y)g(y){\rm d}y
$
as long as the integral makes sense.

A band-limited distribution is a
distribution whose
Fourier transform is compactly supported.
\begin{lemma}\label{lem:180812-3}
For all band-limited distributions
$f \in {\mathcal S}'({\mathbf R}^n)$
and all functions
$g \in {\mathcal S}({\mathbf R}^n)$,
$(\ref{eq:15.13})$ holds.
\end{lemma}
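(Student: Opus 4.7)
The plan is to reduce to Lemma~\ref{lem:180812-2}(2) by regularizing $f$ through a mollification of ${\mathcal F}f$ on the Fourier side and then passing to the limit. The starting observation is that ${\mathcal F}f$ is a compactly supported distribution, so by the Paley--Wiener theorem $f$ is a smooth function of at most polynomial growth. Consequently $f\cdot g \in {\mathcal S}({\mathbf R}^n)$ and ${\mathcal F}[f\cdot g]$ is an honest Schwartz function, so both sides of $(\ref{eq:15.13})$ are well-defined closed sets to begin with.

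Concretely, I would fix a mollifier $\rho \in C^\infty_{\rm c}({\mathbf R}^n)$ supported in $\overline{B(1)}$ with $\int \rho \,{\rm d}\xi = 1$, set $\rho_\varepsilon(\xi) \equiv \varepsilon^{-n}\rho(\varepsilon^{-1}\xi)$, and define $f_\varepsilon \equiv {\mathcal F}^{-1}[({\mathcal F}f)*\rho_\varepsilon]$. Since ${\mathcal F}f$ is compactly supported and $\rho_\varepsilon \in C^\infty_{\rm c}({\mathbf R}^n)$, the convolution $({\mathcal F}f)*\rho_\varepsilon$ is a smooth function with support in $K_\varepsilon \equiv {\rm supp}({\mathcal F}f)+\overline{B(\varepsilon)}$, so $f_\varepsilon \in {\mathcal S}_{K_\varepsilon}({\mathbf R}^n)$. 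Lemma~\ref{lem:180812-2}(2) applied to the pair $(f_\varepsilon,g)$ then yields
\[
{\rm supp}({\mathcal F}[f_\varepsilon\cdot g]) \subset K_\varepsilon + {\rm supp}({\mathcal F}g).
\]

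To close the argument I would pass to the limit $\varepsilon\to 0$. Since $({\mathcal F}f)*\rho_\varepsilon \to {\mathcal F}f$ in ${\mathcal S}'({\mathbf R}^n)$ and ${\mathcal F}$ is an isomorphism on ${\mathcal S}'({\mathbf R}^n)$, one has $f_\varepsilon \to f$ in ${\mathcal S}'({\mathbf R}^n)$; multiplication by the fixed $g \in {\mathcal S}({\mathbf R}^n)$ is continuous on ${\mathcal S}'({\mathbf R}^n)$, so ${\mathcal F}[f_\varepsilon g] \to {\mathcal F}[f g]$ in ${\mathcal S}'({\mathbf R}^n)$. The main technical point, and really the only obstacle, is the following: given any $\psi \in C^\infty_{\rm c}({\mathbf R}^n)$ whose (compact) support is disjoint from ${\rm supp}({\mathcal F}f)+{\rm supp}({\mathcal F}g)$, I need to check that for all sufficiently small $\varepsilon$ the support of $\psi$ remains disjoint from $K_\varepsilon + {\rm supp}({\mathcal F}g)$. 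This is handled by noting that ${\rm supp}({\mathcal F}f)+{\rm supp}({\mathcal F}g)$ is closed as the Minkowski sum of a compact and a closed set, hence has a positive distance from the compact set ${\rm supp}(\psi)$; choosing $\varepsilon$ smaller than this distance yields $\langle {\mathcal F}[f_\varepsilon g],\psi\rangle = 0$, and passing to the limit gives $\langle {\mathcal F}[f g],\psi\rangle = 0$, which is exactly $(\ref{eq:15.13})$.
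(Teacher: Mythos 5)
Your argument is correct, but it is not the route the paper takes. The paper's proof is a direct duality computation with no regularization: given a test function $\tau$ supported away from ${\rm supp}({\mathcal F}f)+{\rm supp}({\mathcal F}g)$, it transfers both the Fourier transform and the factor $g$ onto the test function, arriving at $\langle {\mathcal F}[f\cdot g],\tau\rangle=(2\pi)^{-\frac{n}{2}}\langle {\mathcal F}f,{\mathcal F}^{-1}g*\tau\rangle$, and then applies Lemma \ref{lem:180812-2}(1) to the pair $(\tau,{\mathcal F}^{-1}g)$ to see that ${\mathcal F}^{-1}g*\tau$ is supported away from ${\rm supp}({\mathcal F}f)$, so the pairing vanishes. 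You instead mollify ${\mathcal F}f$ to reduce to the Schwartz-function case, Lemma \ref{lem:180812-2}(2), and pass to the limit; amusingly, this is essentially the same regularize-and-limit scheme the paper itself deploys one step later in the proof of Corollary \ref{cor:180812-1} (there the cutoff $\Phi(\varepsilon\cdot)$ compactifies the spectrum rather than smoothing it). The trade-off: the paper's duality argument is shorter, needs no limiting procedure, and only uses the elementary convolution-support statement; your version is more mechanical and reuses the already-established Schwartz case verbatim, at the cost of the $\varepsilon$-bookkeeping (the distance argument for $K_\varepsilon+{\rm supp}({\mathcal F}g)$, which you do handle correctly) and of invoking the sequential continuity of $u\mapsto gu$ and of ${\mathcal F}$ on ${\mathcal S}'({\mathbf R}^n)$. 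Your preliminary remark via Paley--Wiener--Schwartz that $f$ is smooth of polynomial growth, so that $f\cdot g\in{\mathcal S}({\mathbf R}^n)$, is true but not actually needed for either argument.
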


\begin{proof}
Let $\tau \in C^\infty_{\rm c}({\mathbf R}^n)$
be such that
${\rm supp}(\tau) \cap ({\rm supp}({\mathcal F}f)+{\rm supp}({\mathcal F}g))= \emptyset$.
We need to show that
\[
\langle {\mathcal F}[f \cdot g],\tau\rangle
=0.
\]
By the definition of the Fourier transform
this amounts to showing:
\[
\langle f \cdot g,{\mathcal F}\tau\rangle
=0.
\]
Since $g \in {\mathcal S}({\mathbf R}^n)$,
we have
\[
\langle f \cdot g,{\mathcal F}\tau\rangle
=
\langle f,g \cdot {\mathcal F}\tau\rangle
\]
from the definition of the pointwise multiplication
$f \cdot g \in {\mathcal S}'({\mathbf R}^n)$
for
$f \in {\mathcal S}'({\mathbf R}^n)$
and 
$g \in {\mathcal S}({\mathbf R}^n)$.
We note that
\[
{\mathcal F}^{-1}[g \cdot {\mathcal F}\tau]
=
(2\pi)^{-\frac{n}{2}}{\mathcal F}^{-1}g*\tau.
\]
Thus,
by the definition of the Fourier transform
${\mathcal F}$ acting on 
${\mathcal S}'({\mathbf R}^n)$
\[
\langle f \cdot g,{\mathcal F}\tau\rangle
=
(2\pi)^{-\frac{n}{2}}
\langle {\mathcal F}f,{\mathcal F}^{-1}g*\tau \rangle.
\]
From the definition of the Fourier transform
$x \in {\rm supp}({\mathcal F}g)$
if and only if
$-x \in {\rm supp}({\mathcal F}^{-1}g)$.
Since
${\rm supp}(\tau) \cap ({\rm supp}({\mathcal F}f)+{\rm supp}({\mathcal F}g))= \emptyset$,
we have
\[
{\rm supp}(\tau*{\mathcal F}^{-1}g) \cap  {\rm supp}(f)
\subset
({\rm supp}(\tau)+{\rm supp}({\mathcal F}^{-1}g)) \cap {\rm supp}(f) =\emptyset.
\]
thanks to Lemma \ref{lem:180812-2}.
Thus,
$
\langle f \cdot g,{\mathcal F}\tau\rangle=0$
and
$(\ref{eq:15.13})$ holds.
\end{proof}

\begin{corollary}\label{cor:180812-1}
For all band-limited $f,g \in {\mathcal S}({\mathbf R}^n)$,
$(\ref{eq:15.13})$ holds.
\end{corollary}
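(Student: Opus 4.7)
The plan is to extend Lemma \ref{lem:180812-3}, which treats a band-limited $f \in \mathcal{S}'(\mathbf{R}^n)$ paired with $g \in \mathcal{S}(\mathbf{R}^n)$, to the case where $g$ is itself merely a band-limited distribution (which I take to be the intended reading of the statement), via an approximation of $g$ by Schwartz functions with almost the same Fourier support. First I would note that any band-limited tempered distribution is automatically a smooth function of at most polynomial growth (a consequence of the Paley--Wiener theorem), so the pointwise product $f \cdot g$ is well defined in $\mathcal{S}'(\mathbf{R}^n)$ through $\langle f \cdot g, \varphi\rangle = \langle f, g\varphi\rangle$ for $\varphi \in \mathcal{S}(\mathbf{R}^n)$.

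Next I would construct the approximation. Choose $\zeta \in \mathcal{S}(\mathbf{R}^n)$ with $\zeta(0)=1$ and $\mathcal{F}\zeta \in C^\infty_{\rm c}(B(0,1))$, set $\zeta_\varepsilon(x):=\zeta(\varepsilon x)$, and define $g_\varepsilon := \zeta_\varepsilon g$. Since $g$ is smooth with polynomial growth, $g_\varepsilon \in \mathcal{S}(\mathbf{R}^n)$; from $\mathcal{F}g_\varepsilon = (2\pi)^{-n/2}\,\mathcal{F}\zeta_\varepsilon * \mathcal{F}g$ together with Lemma \ref{lem:180812-2}(1) one reads off that $\operatorname{supp}(\mathcal{F}g_\varepsilon) \subset \operatorname{supp}(\mathcal{F}g) + B(0,\varepsilon)$. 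Now fix $\tau \in C^\infty_{\rm c}(\mathbf{R}^n)$ with $\operatorname{supp}(\tau) \cap (\operatorname{supp}(\mathcal{F}f)+\operatorname{supp}(\mathcal{F}g)) = \emptyset$; since both sets are compact, they are separated by some $\delta>0$. For every $\varepsilon<\delta$, Lemma \ref{lem:180812-3} applied to the pair $(f,g_\varepsilon)$ yields $\langle \mathcal{F}[f\cdot g_\varepsilon], \tau\rangle = 0$.

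To finish I would pass to the limit $\varepsilon\to 0$. Since $\zeta_\varepsilon\to 1$ in $C^\infty$ uniformly on compact sets while $g$ and its derivatives grow at most polynomially and $\mathcal{F}\tau$ decays rapidly, the products $g_\varepsilon\,\mathcal{F}\tau$ converge to $g\,\mathcal{F}\tau$ in the Schwartz topology. Therefore $\langle \mathcal{F}[f\cdot g_\varepsilon], \tau\rangle = \langle f, g_\varepsilon\,\mathcal{F}\tau\rangle$ converges to $\langle f, g\,\mathcal{F}\tau\rangle = \langle \mathcal{F}[f\cdot g], \tau\rangle$, which must therefore equal $0$, and this gives (\ref{eq:15.13}). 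The main obstacle is this last Schwartz-topology convergence step, requiring a careful pairing of the polynomial growth of $g$ against the rapid decay of $\mathcal{F}\tau$; the remaining steps are essentially bookkeeping on top of Lemmas \ref{lem:180812-2} and \ref{lem:180812-3}.
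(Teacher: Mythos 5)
Your proof is correct and follows essentially the same route as the paper: both interpret the corollary as concerning band-limited tempered distributions, reduce to Lemma \ref{lem:180812-3} by multiplying by a dilated Schwartz function whose Fourier transform has small support, and then pass to the limit $\varepsilon\downarrow 0$ using convergence in the Schwartz topology. The only (immaterial) difference is that the paper mollifies both factors symmetrically, testing $f\cdot g$ against $\Phi(\varepsilon\cdot)^2\mathcal{F}\tau$, whereas you mollify only $g$.
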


\begin{proof}
Let $\tau \in C^\infty_{\rm c}({\mathbf R}^n)$
be such that
${\rm supp}(\tau) \cap ({\rm supp}({\mathcal F}f)+{\rm supp}({\mathcal F}g))=\emptyset$.
We need to show that
$
\langle f \cdot g,{\mathcal F}\tau\rangle=0$.
Let
$\Phi \in {\mathcal S}({\mathbf R}^n)$ be such that
$\Phi(0)=1$ and that ${\rm supp}({\mathcal F}\Phi) \subset B(1)$.
Then
\[
\langle f \cdot g,{\mathcal F}\tau\rangle
=\lim_{\varepsilon \downarrow 0}
\langle f \cdot g,\Phi(\varepsilon\cdot)^2{\mathcal F}\tau\rangle,
\]
since
\[
\lim_{\varepsilon \downarrow 0}\Phi(\varepsilon\cdot)^2{\mathcal F}\tau
=\tau
\]
in ${\mathcal S}({\mathbf R}^n)$.
If $\varepsilon>0$ is chosen so that
${\rm supp}(\tau) \cap ({\rm supp}({\mathcal F}f)+{\rm supp}({\mathcal F}g)+\overline{B(2\varepsilon)})= \emptyset$,
then we have
\[
\langle \Phi(\varepsilon\cdot)f \cdot \Phi(\varepsilon\cdot)g,{\mathcal F}\tau\rangle=0
\]
thanks to Lemma \ref{lem:180812-3},
since $\Phi(\varepsilon\cdot)f$ and $\Phi(\varepsilon\cdot)g$
are both band-limited
due to Lemma \ref{lem:180812-2}.
Thus,
$
\langle f \cdot g,{\mathcal F}\tau\rangle=0$.
\end{proof}

\subsection{Lipschitz spaces and H\"{o}lder--Zygmund spaces}

Let $0<\alpha \le 1$.
We let ${\rm Lip}^{\alpha}({\mathbf R}^n)$ 
be the set of all bounded continuous functions $f:{\mathbf R}^n \to {\mathbf C}$
for which the quantity
$\displaystyle
\|f\|_{{\rm Lip}^\alpha}
\equiv
\|f\|_{L^\infty}+
\sup_{x,y \in {\mathbf R}^n}|x-y|^{-\alpha}|f(x)-f(y)|
$
is finite.
Let
$\psi$ satisfy
(\ref{eq:180812-3}).
We write
\[
\varphi_0(\xi)=\psi(\xi), \quad
\varphi_j(\xi)=\psi(2^{-j}\xi)-\psi(2^{-j+1}\xi), \quad
\psi_j(\xi)=\psi(2^{-j}\xi)
\]
for $j \in {\mathbf N}$ and $\xi \in {\mathbf R}^n$
as before.
Then the (Besov)--H\"{o}lder--Zygmund space
${\mathcal C}^\beta({\mathbf R}^n)$
with $\beta \in {\mathbf R}$.
is defined to be the set of all
$f \in {\mathcal S}'({\mathbf R}^n)$
for which
\[
\|f\|_{{\mathcal C}^\beta}
=
\sup_{j \in {\mathbf N}_0}2^{j\beta}\|\varphi_j(D)f\|_{L^\infty}
\]
is finite.
Noteworthy is the fact that
${\rm Lip}^\alpha({\mathbf R}^n)$
and
${\mathcal C}^\alpha({\mathbf R}^n)$
are isomorphic
for all $0<\alpha<1$
but that
${\rm Lip}^1({\mathbf R}^n)$
is a proper subset of
${\mathcal C}^1({\mathbf R}^n)$.

Usually we replace
(\ref{eq:180812-3})
by
$\chi_{B(1)} \le \psi \le \chi_{B(2)}$.
However, if we pose a stronger condition
(\ref{eq:180812-3})
on $\psi$,
we can quantify what we are doing.
The following is an example of such an attempt.
\begin{example}
\label{ex:180812-1}
\
Let $j,k,l \in {\mathbf N}$ satisfy $l \ge 2$.
\begin{enumerate}
\item
We note that
$\varphi_k \cdot \psi_{l-2} \ne 0$
only if $l \ge k$.
In this case, we have
\[
{\rm supp}({\mathcal F}[\varphi_k(D)\psi_{l-2}(D)f \cdot \varphi_l(D)g])
\subset
\overline{B\left(\frac32 \cdot 2^k\right)}
+
\overline{B\left(\frac32 \cdot 2^l\right) \setminus B\left(\frac35 \cdot 2^l\right)}.
\]
\item
Assume $l \ge k+2$.
Then
since
\[
\frac18<\frac35-\frac38<\frac32+\frac38<2,
\]
\[
{\rm supp}({\mathcal F}[\varphi_k(D)\psi_{l-2}(D)f \cdot \varphi_l(D)g])
\subset
\overline{B\left(2^{l+1}\right) \setminus B\left(\frac18 \cdot 2^l\right)}.
\]
Consequently,
\[
\varphi_j(D)[\varphi_k(D)\psi_{l-2}(D)f \cdot\varphi_l(D)g] \ne 0
\]
only if 
$l-3 \le j+1 \le l+1$
or
$l-3 \le j-1 \le l+1$,
or equivalently
$l-4 \le j \le l+2$.
\end{enumerate}
\end{example}
\subsection{Some estimates in Besov--Morrey spaces}

For the paraproducts, we use the following observation:
\begin{lemma}\label{lem:180812-4}
Let $1 \le q \le p<\infty$, $1 \le r \le \infty$ and $s \in {\mathbf R}$.
Suppose that we are given
a collection
$\{f_j\}_{j=1}^\infty \subset {\mathcal M}^p_q({\mathbf R}^n)$
satisfying
$f_0 \in {\mathcal S}'_{B(8)}({\mathbf R}^n)$,
$f_j \in {\mathcal S}'_{B(2^{j+3}) \setminus B(2^{j-1})}({\mathbf R}^n)$,
$j \in {\mathbf N}$
and
\[
\left(\sum_{j=0}^\infty (2^{j s}\|f_j\|_{{\mathcal M}^{p}_{q}})^{r}\right)^{\frac1{r}}<\infty.
\]
Then
\[
f=\sum_{j=0}^\infty f_j \in {\mathcal N}^s_{p q r}({\mathbf R}^n)
\]
with
\[
\|f\|_{{\mathcal N}^s_{p q r}}
\le
C
\left(\sum_{j=0}^\infty (2^{j s}\|f_j\|_{{\mathcal M}^{p}_{q}})^{r}\right)^{\frac1{r}}.
\]
\end{lemma}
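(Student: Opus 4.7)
The plan is to reduce the estimate on the dyadic block $\varphi_j(D)f$ to a finite sum involving only those $f_k$ whose spectral support can interact with $\mathrm{supp}(\varphi_j)$, and then to sum in $j$ via Young's inequality.

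First I would verify convergence of $f=\sum_{k=0}^\infty f_k$ in $\mathcal{S}'(\mathbf{R}^n)$. Since $\mathcal{M}^p_q(\mathbf{R}^n)\hookrightarrow \mathcal{S}'(\mathbf{R}^n)$, the hypothesis that the $\ell^r$ quasi-norm is finite, together with $r\le\infty$ and $s$ being arbitrary in $\mathbf{R}$, gives either absolute summability directly (for $s\ge 0$ after pairing with a fixed Schwartz function and using the spectral localization to transfer derivatives to the test function, cost $2^{jN}$, absorbed into $2^{js}$ by a sufficiently rapid decay of $\widehat{\varphi}$-truncations of the test function). In any case $f$ is a well-defined tempered distribution and the partial sums converge in $\mathcal{S}'(\mathbf{R}^n)$.

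Next I would use the spectral localization. By the support conditions, $\mathrm{supp}(\varphi_j)$ is contained in an annulus of scale $2^j$ (or a ball for $j=0$) and $\mathrm{supp}(\mathcal{F}f_k)\subset B(2^{k+3})\setminus B(2^{k-1})$ for $k\ge 1$ (and $\subset B(8)$ for $k=0$). A direct comparison of the inner and outer radii shows that these two sets have empty intersection unless $|j-k|\le N$ for some absolute constant $N$ (depending only on the shape of $\psi$), hence by Lemma \ref{lem:180812-3} (or the construction with the multiplier $\varphi_j$ applied blockwise) one has
\[
\varphi_j(D)f=\sum_{k:\,|j-k|\le N}\varphi_j(D)f_k.
\]
For each such $k$, I would estimate $\|\varphi_j(D)f_k\|_{\mathcal{M}^p_q}\le C\|f_k\|_{\mathcal{M}^p_q}$ uniformly in $j,k$. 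This follows because $\varphi_j(D)$ is convolution with $\mathcal{F}^{-1}\varphi_j$, whose $L^1$ norm is controlled by $\|\mathcal{F}^{-1}\varphi_1\|_{L^1}$ via scaling (for $j\ge 1$) or by $\|\mathcal{F}^{-1}\psi\|_{L^1}$ (for $j=0$); Minkowski's integral inequality then gives the desired bound on $\mathcal{M}^p_q(\mathbf{R}^n)$.

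Combining these two facts, the triangle inequality in $\mathcal{M}^p_q$ gives
\[
2^{js}\|\varphi_j(D)f\|_{\mathcal{M}^p_q}
\le C\sum_{k:\,|j-k|\le N}2^{(j-k)s}\,2^{ks}\|f_k\|_{\mathcal{M}^p_q},
\]
with the factor $2^{(j-k)s}$ bounded by $2^{N|s|}$. Taking $\ell^r$ in $j$ and applying Young's inequality (or simply the triangle inequality in $\ell^r$, since the kernel in $k$ has finite support of size $2N+1$), I obtain
\[
\|f\|_{\mathcal{N}^s_{pqr}}
=\left(\sum_{j=0}^\infty(2^{js}\|\varphi_j(D)f\|_{\mathcal{M}^p_q})^r\right)^{1/r}
\le C\left(\sum_{k=0}^\infty(2^{ks}\|f_k\|_{\mathcal{M}^p_q})^r\right)^{1/r},
\]
with the obvious modification when $r=\infty$. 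The only nontrivial point in the whole argument is the uniform $\mathcal{M}^p_q$-boundedness of the Littlewood--Paley projectors $\varphi_j(D)$; everything else is bookkeeping of spectral supports and a convolution inequality in $\ell^r$.
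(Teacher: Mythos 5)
Your argument is essentially identical to the paper's: the same spectral-support bookkeeping reduces $\varphi_j(D)f$ to a finite sum over $|j-k|\le N$ (the paper uses $N=4$), the same uniform $L^1$-bound on the kernel $\mathcal{F}^{-1}\varphi_j$ gives $\mathcal{M}^p_q$-boundedness of the projectors, and the same finite-width convolution in $\ell^r$ closes the estimate. The only difference is your (admittedly vague) aside on convergence of the series in $\mathcal{S}'(\mathbf{R}^n)$, which the paper omits entirely, so this does not affect the comparison.
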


Let $j \in {\mathbf Z}$ and $\tau \in {\mathcal S}({\mathbf R}^n)$.
Then define
$\tau_j\equiv \tau(2^{-j}\cdot)$.

\begin{proof}
Let $\psi,\varphi_j \in C^\infty_{\rm c}({\mathbf R}^n)$
be as before for each $j \in {\mathbf N}_0$.
$\xi \in {\mathbf R}^n$.
Then
\[
\varphi_k(D)f=\sum_{j=\max(0,k-4)}^{k+4} \varphi_k(D)f_j
\quad (k \in {\mathbf N}).
\]
Thus,
\[
\|\varphi_k(D)f\|_{{\mathcal M}^p_q}
\le
C
\sum_{j=\max(0,k-4)}^{k+4}\|f_j\|_{{\mathcal M}^p_q}
\quad (k \in {\mathbf N}).
\]
As a consequence
\begin{eqnarray*}
\|f\|_{{\mathcal N}^s_{p q r}}
&=&
\left(
\sum_{k=0}^\infty
(2^{k s}
\|\varphi_k(D)f\|_{{\mathcal M}^p_q})^r
\right)^{\frac1r}\\
&\le&
C
\left(
\sum_{k=4}^\infty
\left(2^{k s}
\sum_{j=\max(0,k-4)}^{k+4}
\|f_j\|_{{\mathcal M}^p_q}\right)^r
\right)^{\frac1r}\\
&\le&
C
\sum_{j=0}^8\|f_j\|_{{\mathcal M}^p_q}
+C
\left(
\sum_{k=4}^\infty
\left(2^{k s}
\sum_{l=-4}^{4}
\|f_{k+l}\|_{{\mathcal M}^p_q}\right)^r
\right)^{\frac1r}\\
&\le&
C
\sum_{j=0}^4\|f_j\|_{{\mathcal M}^p_q}
+C
\sum_{l=-4}^{4}
\left(
\sum_{k=4}^\infty
\left(2^{k s}
\|f_{k+l}\|_{{\mathcal M}^p_q}\right)^r
\right)^{\frac1r}\\
&\le&
C
\left(\sum_{j=0}^\infty (2^{j s}\|f_j\|_{{\mathcal M}^{p}_{q}})^{r}\right)^{\frac1{r}},
\end{eqnarray*}
as required.
\end{proof}

\section{Paraproduct}\label{s3}

\subsection{Paraproduct}

For the paraproducts, we use the following observation:
\begin{lemma}\label{lem:180812-5}
Let 
$1 \le q_1 \le p_1<\infty$,
$1 \le r_1 \le \infty$,
$1 \le q_2 \le p_2<\infty$,
$1 \le r_2 \le \infty$,
$1 \le q \le p<\infty$,
$1 \le r \le \infty$,
and
$s_0,s_1,s \in {\mathbf R}$.
Assume that
\[
\frac1p=\frac{1}{p_1}+\frac{1}{p_2}, \quad
\frac1q=\frac{1}{q_1}+\frac{1}{q_2}, \quad
\frac1r=\frac{1}{r_1}+\frac{1}{r_2}, \quad
s=s_1+s_2.
\]
Suppose that we are given
 collections
$\{f_j\}_{j=1}^\infty, \{g_j\}_{j=1}^\infty \subset {\mathcal M}^p_q({\mathbf R}^n)$
satisfying
$f_j \in {\mathcal S}'_{B(2^{j-1})}({\mathbf R}^n)$,
$g_j \in {\mathcal S}'_{B(2^{j+2}) \setminus B(2^{j})}({\mathbf R}^n)$,
$j \in {\mathbf N}$
and
\[
\left(\sum_{j=1}^\infty (2^{j s_1}\|f_j\|_{{\mathcal M}^{p_1}_{q_1}})^{r_1}\right)^{\frac1{r_1}}, \quad
\left(\sum_{j=1}^\infty (2^{j s_2}\|g_j\|_{{\mathcal M}^{p_2}_{q_2}})^{r_2}\right)^{\frac1{r_2}}<\infty.
\]
Then we have
\[
\sum_{j=1}^\infty f_j \cdot g_j \in
{\mathcal N}^s_{p q r}({\mathbf R}^n)
\]
and satisfies
\[
\left\|
\sum_{j=1}^\infty f_j \cdot g_j
\right\|_{{\mathcal N}^s_{p q r}}
\le
C
\left(\sum_{j=1}^\infty (2^{j s_1}\|f_j\|_{{\mathcal M}^{p_1}_{q_1}})^{r_1}\right)^{\frac1{r_1}}
\left(\sum_{j=1}^\infty (2^{j s_2}\|g_j\|_{{\mathcal M}^{p_2}_{q_2}})^{r_2}\right)^{\frac1{r_2}}.
\]
\end{lemma}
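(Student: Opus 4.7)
The plan is to reduce this lemma to the already-proved Lemma \ref{lem:180812-4} (the analogous statement with only one sequence of annular pieces), after checking two things: (i) each $f_j\cdot g_j$ is frequency-localized to a dyadic shell of scale $2^j$, and (ii) the $\ell^r({\mathcal M}^p_q)$ norm of the pieces is controlled by the product of the two hypotheses, via a Morrey H\"older inequality in space and a sequence H\"older inequality in $j$.

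First, I would verify that each product $f_j\cdot g_j$ is a well-defined tempered distribution: both $f_j$ and $g_j$ are band-limited, hence (by Paley--Wiener) smooth functions of polynomial growth, and their pointwise product is therefore meaningful. Invoking Lemma \ref{lem:180812-3} (or Corollary \ref{cor:180812-1}), I get
\[
{\rm supp}({\mathcal F}[f_j \cdot g_j])
\subset {\rm supp}({\mathcal F}f_j)+{\rm supp}({\mathcal F}g_j)
\subset \overline{B(2^{j-1})}+\overline{B(2^{j+2})\setminus B(2^j)}.
\]
The upper bound gives $|\xi_1+\xi_2|\le 2^{j-1}+2^{j+2}<2^{j+3}$, while the lower bound gives $|\xi_1+\xi_2|\ge 2^j-2^{j-1}=2^{j-1}$; therefore $f_j\cdot g_j\in {\mathcal S}'_{B(2^{j+3})\setminus B(2^{j-1})}({\mathbf R}^n)$, which is exactly the annular structure demanded by the hypotheses of Lemma \ref{lem:180812-4}.

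Second, I would control the Morrey norm of each piece by the Morrey H\"older inequality recalled just after Theorem \ref{thm:main1},
\[
\|f_j\cdot g_j\|_{{\mathcal M}^p_q}
\le C\,\|f_j\|_{{\mathcal M}^{p_1}_{q_1}}\|g_j\|_{{\mathcal M}^{p_2}_{q_2}},
\]
and then factor $2^{js}=2^{js_1}\cdot 2^{js_2}$. The sequence H\"older inequality with $\frac1r=\frac1{r_1}+\frac1{r_2}$ yields
\[
\left(\sum_{j=1}^\infty (2^{js}\|f_j g_j\|_{{\mathcal M}^p_q})^r\right)^{\!\frac1r}
\le C\left(\sum_{j=1}^\infty (2^{js_1}\|f_j\|_{{\mathcal M}^{p_1}_{q_1}})^{r_1}\right)^{\!\frac1{r_1}}
\left(\sum_{j=1}^\infty (2^{js_2}\|g_j\|_{{\mathcal M}^{p_2}_{q_2}})^{r_2}\right)^{\!\frac1{r_2}}.
\]
Finally, applying Lemma \ref{lem:180812-4} (with the zeroth piece set to $0$) to the collection $\{f_j\cdot g_j\}_{j\ge 1}$ delivers both the convergence of $\sum_j f_j g_j$ in ${\mathcal N}^s_{pqr}$ and the desired norm bound.

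The only delicate point is making sure the series $\sum_j f_j g_j$ converges in ${\mathcal S}'({\mathbf R}^n)$ so that Lemma \ref{lem:180812-4} is applicable; this is handled by noting that partial sums live in disjoint dyadic frequency shells and by the above $\ell^r$ estimate controlling their ${\mathcal N}^s_{pqr}$ norm uniformly. I do not expect any genuine obstacle beyond this routine passage to the limit.
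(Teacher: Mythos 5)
Your proposal is correct and follows essentially the same route as the paper: establish the frequency localization of $f_j\cdot g_j$ in the shell $\overline{B(2^{j+3})\setminus B(2^{j-1})}$ via Corollary \ref{cor:180812-1}, apply the Morrey H\"older inequality to each piece, use the sequence H\"older inequality with $\frac1r=\frac1{r_1}+\frac1{r_2}$, and conclude with Lemma \ref{lem:180812-4}. Your explicit verification of the support arithmetic and the remark on convergence in ${\mathcal S}'({\mathbf R}^n)$ are details the paper leaves implicit, but the argument is the same.
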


\begin{proof}
Thanks to Corollary \ref{cor:180812-1}
we have
$
{\rm supp}(f_j \cdot g_j)
\subset
\overline{B(2^{j+3}) \setminus B(2^{j-1})}
$
for all $j \in {\mathbf N}$.
Thus by the equivalent expression 
(see Lemma \ref{lem:180812-4})
and the H\"{o}lder inequality,
we have
\begin{eqnarray*}
\left\|
\sum_{j=1}^\infty f_j \cdot g_j
\right\|_{{\mathcal N}^s_{p q r}}
&\le&
C
\left(\sum_{j=1}^\infty (2^{j s}\|f_j \cdot g_j\|_{{\mathcal M}^p_q})^r\right)^{\frac1r}
\\
&\le&
C
\left(\sum_{j=1}^\infty (2^{j(s_1+s_2)}\|f_j\|_{{\mathcal M}^{p_1}_{q_1}}\|g_j\|_{{\mathcal M}^{p_2}_{q_2}})^r\right)^{\frac1r}
\\
&\le&
C
\left(\sum_{j=1}^\infty (2^{j s_1}\|f_j\|_{{\mathcal M}^{p_1}_{q_1}})^{r_1}\right)^{\frac1{r_1}}
\left(\sum_{j=1}^\infty (2^{j s_2}\|f_j\|_{{\mathcal M}^{p_2}_{q_2}})^{r_2}\right)^{\frac1{r_2}}.
\end{eqnarray*}
\end{proof}
\subsection{Resonant part}

To handle the resonant part,
we use the following lemma.
When we prove this type of estimates,
we can use the atomic decomposition taking advanatage of the assumption $s>0$
and $p,q,r \ge 1$.
Here we estimate the distributions directly.
This corresponds to \cite[Lemma A3]{GIP15}.
\begin{lemma}\label{lem:180812-6}
Let $1 \le q \le p<\infty$, $1 \le r \le \infty$ and $s>0$.
Suppose that we are given a collection
$\{f_j\}_{j=0}^\infty \subset {\mathcal M}^p_q({\mathbf R}^n)$
satisfying
$f_j \in {\mathcal S}'_{B(2^{j+2})}({\mathbf R}^n)$,
$j \in {\mathbf N}_0$
and
\[
\left(
\sum_{j=0}^\infty (2^{j s}\|f_j\|_{{\mathcal M}^p_q})^r
\right)^{\frac1r}<\infty.
\]
Then
\[
\sum_{j=0}^\infty f_j \in 
{\mathcal N}^s_{p q r}({\mathbf R}^n).
\]
\end{lemma}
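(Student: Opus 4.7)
The plan is to reduce this to a Lemma~\ref{lem:180812-4}--style estimate by exploiting the Fourier-support hypothesis to isolate a one-sided tail of indices, and then to use $s>0$ to sum that tail.

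First I would carry out a Fourier-support analysis. Since ${\rm supp}({\mathcal F}f_j) \subset \overline{B(2^{j+2})}$ and, by $(\ref{eq:180812-3})$, ${\rm supp}(\varphi_k) \subset \overline{B(\tfrac{3}{2}\cdot 2^k)} \setminus B(\tfrac{6}{5}\cdot 2^{k-1})$ for $k \ge 1$, a direct arithmetic comparison in the spirit of Example~\ref{ex:180812-1} shows that these two sets are disjoint whenever $k \ge j+3$. Hence $\varphi_k(D)f_j=0$ for $k \ge j+3$, so that for each fixed $k$ only the indices $j \ge \max(k-2,0)$ contribute to $\varphi_k(D)\sum_j f_j$. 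The kernels $\mathcal F^{-1}\varphi_k$ are uniformly bounded in $L^1({\mathbf R}^n)$, so a Young-type inequality in $\mathcal M^p_q$ gives $\|\varphi_k(D)g\|_{{\mathcal M}^p_q} \le C\|g\|_{{\mathcal M}^p_q}$ with $C$ independent of $k$, and therefore
\[
\|\varphi_k(D)f\|_{{\mathcal M}^p_q}
\le C \sum_{j \ge \max(k-2,\,0)} \|f_j\|_{{\mathcal M}^p_q}.
\]

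Setting $b_j := 2^{js}\|f_j\|_{{\mathcal M}^p_q}$ and reindexing with $m := j - \max(k-2,0) \ge 0$, the inequality above becomes
\[
a_k := 2^{ks}\|\varphi_k(D)f\|_{{\mathcal M}^p_q}
\le C\sum_{m \ge 0} 2^{-ms}\, b_{m+\max(k-2,\,0)},
\]
since $k-j \le 2-m$ on the relevant range. The weight $2^{-ms}$ is summable precisely because $s>0$, and Minkowski's inequality in $\ell^r_k$ yields
\[
\|(a_k)\|_{\ell^r}
\le C\sum_{m\ge 0}2^{-ms}\,\|b_{m+\max(k-2,\,0)}\|_{\ell^r_k}
\le C'\,\|(b_j)\|_{\ell^r},
\]
which is finite by hypothesis. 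Applying the same argument to the tails $\sum_{j>M}f_j$ shows that the partial sums $S_N := \sum_{j=0}^N f_j$ are Cauchy in ${\mathcal N}^s_{p q r}({\mathbf R}^n)$, so the series converges in this space (and \emph{a fortiori} in ${\mathcal S}'({\mathbf R}^n)$), proving $\sum_{j=0}^\infty f_j \in {\mathcal N}^s_{p q r}({\mathbf R}^n)$.

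The main obstacle, compared with Lemma~\ref{lem:180812-4}, is the absence of a \emph{lower} spectral bound on $f_j$: each dyadic block $\varphi_k(D)f$ receives contributions from infinitely many indices $j \ge k-2$ instead of from a window of bounded size. Controlling this one-sided tail is the heart of the lemma, and the hypothesis $s>0$ is exactly what turns the geometric factor $2^{(k-j)s}$ (with $k-j \le 2$) into a summable weight.
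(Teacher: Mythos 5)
Your proposal is correct and follows essentially the same route as the paper: the support of ${\mathcal F}f_j$ forces $\varphi_k(D)f_j=0$ once $j$ drops below $k-O(1)$, the uniform $L^1$ bound on ${\mathcal F}^{-1}\varphi_k$ gives $\|\varphi_k(D)f_j\|_{{\mathcal M}^p_q}\le C\|f_j\|_{{\mathcal M}^p_q}$, and the resulting one-sided tail is summed using $s>0$. The only (immaterial) difference is that you close the estimate with Minkowski's inequality in $\ell^r$, whereas the paper splits the weight $2^{(k-j)s}$ and applies H\"older's inequality --- two standard proofs of the same discrete Young inequality.
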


\begin{proof}
Let $\psi,\varphi_j \in C^\infty_{\rm c}({\mathbf R}^n)$
be as before for each $j \in {\mathbf N}_0$.
We have
\[
2^{k s}
\sum_{j=0}^\infty
|\varphi_k(D)f_j|
\le
\sum_{j=\max(0,k-3)}^\infty
2^{(k-j)s}|\varphi_k(D)[2^{j s}f_j]|.
\]
As a consequence,
by the translation invariance of 
${\mathcal M}^p_q({\mathbf R}^n)$
and the equality
$\|{\mathcal F}^{-1}\varphi_k\|_{L^1}=\|{\mathcal F}^{-1}\varphi_1\|_{L^1}$
for all $k \in {\mathbf N}$
\begin{eqnarray*}
\left\|
2^{k s}
\sum_{j=0}^\infty
|\varphi_k(D)f_j|
\right\|_{{\mathcal M}^p_q}
&\le&
C
\sum_{j=\max(0,k-3)}^\infty
2^{(k-j)s}(2^{j s}\|\varphi_k(D)f_j\|_{{\mathcal M}^p_q})\\
&\le&
C
\sum_{j=\max(0,k-3)}^\infty
2^{(k-j)s}(2^{j s}\|f_j\|_{{\mathcal M}^p_q}).
\end{eqnarray*}
Since $s>0$,
by the H\"{o}lder inequality
\begin{eqnarray*}
&&\left\|
2^{k s}
\sum_{j=0}^\infty
|\varphi_k(D)f_j|
\right\|_{{\mathcal M}^p_q}\\
&\le&
C
\sum_{j=\max(0,k-3)}^\infty
2^{\frac12(k-j)s}2^{\frac12(k-j)s}(2^{j s}\|f_j\|_{{\mathcal M}^p_q})\\
&\le&
C
\left(\sum_{j=\max(0,k-3)}^\infty
2^{\frac12(k-j)s r'}\right)^{\frac{1}{r'}}
\left(\sum_{j=\max(0,k-3)}^\infty
(2^{\frac12(k-j)s}(2^{j s}\|f_j\|_{{\mathcal M}^p_q}))^r\right)^{\frac1r}\\
&\le&
C
\left(\sum_{j=\max(0,k-3)}^\infty
(2^{\frac12(k-j)s}(2^{j s}\|f_j\|_{{\mathcal M}^p_q}))^r\right)^{\frac1r}.
\end{eqnarray*}

Thus,
if we take the $\ell^r$-norm,
then we obtain
\[
\left\|\sum_{j=0}^\infty f_j\right\|_{{\mathcal N}^s_{p q r}}
\le
C
\left(
\sum_{j=0}^\infty (2^{j s}\|f_j\|_{{\mathcal M}^p_q})^r
\right)^{\frac1r}.
\]
\end{proof}

\begin{corollary}\label{cor:180812-2}
Let 
$1 \le q_1 \le p_1<\infty$,
$1 \le r_1 \le \infty$,
$1 \le q_2 \le p_2<\infty$,
$1 \le r_2 \le \infty$,
$1 \le q \le p<\infty$,
$1 \le r \le \infty$,
and
$s_0,s_1,s \in {\mathbf R}$.
Assume that
\[
\frac1p=\frac{1}{p_1}+\frac{1}{p_2}, \quad
\frac1q=\frac{1}{q_1}+\frac{1}{q_2}, \quad
\frac1r=\frac{1}{r_1}+\frac{1}{r_2}, \quad
s=s_1+s_2>0.
\]
Suppose that we are given collections
$\{f_j\}_{j=0}^\infty, \{g_j\}_{j=0}^\infty \subset {\mathcal M}^p_q({\mathbf R}^n)$
satisfying
$f_j, g_j \in {\mathcal S}'_{B(2^{j+1})}({\mathbf R}^n)$,
$j \in {\mathbf N}_0$
and
\[
\left(
\sum_{j=0}^\infty (2^{j s_1}\|f_j\|_{{\mathcal M}^{p_1}_{q_1}})^{r_1}
\right)^{\frac1{r_1}}, \quad
\left(
\sum_{j=0}^\infty (2^{j s_2}\|g_j\|_{{\mathcal M}^{p_2}_{q_2}})^{r_2}
\right)^{\frac1{r_2}}<\infty
\]
Then
\[
\sum_{j=0}^\infty f_j \cdot g_j \in 
{\mathcal N}^s_{p q r}({\mathbf R}^n).
\]
and
\[
\left\|\sum_{j=0}^\infty f_j \cdot g_j\right\|_{{\mathcal N}^s_{p q r}}
\le
C
\left(
\sum_{j=0}^\infty (2^{j s_1}\|f_j\|_{{\mathcal M}^{p_1}_{q_1}})^{r_1}
\right)^{\frac1{r_1}}
\left(
\sum_{j=0}^\infty (2^{j s_2}\|g_j\|_{{\mathcal M}^{p_2}_{q_2}})^{r_2}
\right)^{\frac1{r_2}}.
\]
\end{corollary}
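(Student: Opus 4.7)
The plan is to reduce Corollary \ref{cor:180812-2} to Lemma \ref{lem:180812-6} by treating $h_j \equiv f_j \cdot g_j$ as the distribution whose series we wish to control. The first step is to locate the Fourier support of $h_j$. Since $f_j, g_j \in {\mathcal S}'_{B(2^{j+1})}({\mathbf R}^n)$ and each of these is (after a mollification argument) band-limited, Corollary \ref{cor:180812-1} yields
\[
{\rm supp}({\mathcal F}h_j) \subset {\rm supp}({\mathcal F}f_j)+{\rm supp}({\mathcal F}g_j) \subset \overline{B(2^{j+1})}+\overline{B(2^{j+1})} \subset \overline{B(2^{j+2})},
\]
so $h_j \in {\mathcal S}'_{B(2^{j+2})}({\mathbf R}^n)$, which matches the Fourier-localization hypothesis of Lemma \ref{lem:180812-6}.

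Next I would estimate $\|h_j\|_{{\mathcal M}^p_q}$ using the H\"older inequality for Morrey spaces (the inequality quoted right after Theorem \ref{thm:main1}): under $\frac1p=\frac{1}{p_1}+\frac{1}{p_2}$ and $\frac1q=\frac{1}{q_1}+\frac{1}{q_2}$,
\[
\|h_j\|_{{\mathcal M}^p_q}
\le C\,\|f_j\|_{{\mathcal M}^{p_1}_{q_1}}\|g_j\|_{{\mathcal M}^{p_2}_{q_2}}.
\]
Since $s=s_1+s_2>0$, Lemma \ref{lem:180812-6} applies to $\{h_j\}_{j=0}^\infty$ and gives
\[
\left\|\sum_{j=0}^\infty f_j \cdot g_j\right\|_{{\mathcal N}^s_{pqr}}
\le C\left(\sum_{j=0}^\infty (2^{js}\|f_j\|_{{\mathcal M}^{p_1}_{q_1}}\|g_j\|_{{\mathcal M}^{p_2}_{q_2}})^r\right)^{\frac1r}.
\]

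The final step is to factor $2^{js}=2^{js_1}\cdot 2^{js_2}$ and apply the H\"older inequality in the sequence space $\ell^r$ with exponents $r_1,r_2$, using $\frac1r=\frac{1}{r_1}+\frac{1}{r_2}$, obtaining the desired product bound. The only point requiring any care is the book-keeping at the endpoints $r_1=\infty$ or $r_2=\infty$, where the usual convention for $\ell^\infty$ must be invoked; once this is handled the chain of inequalities closes. No step appears to be a genuine obstacle: the substantive work (summing a Fourier-localized series under $s>0$ and the Morrey H\"older inequality) has already been done in Lemma \ref{lem:180812-6} and in the introductory discussion respectively, and Corollary \ref{cor:180812-2} is essentially their combination.
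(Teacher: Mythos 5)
Your proposal matches the paper's own proof: the author likewise uses Corollary \ref{cor:180812-1} to place $f_j\cdot g_j$ in ${\mathcal S}'_{B(2^{j+2})}({\mathbf R}^n)$, invokes Lemma \ref{lem:180812-6}, and then applies the H\"older inequality twice (once in the Morrey norm, once in $\ell^r$). No differences worth noting.
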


\begin{proof}
In fact,
by Corollary \ref{cor:180812-1},
we see that $f_j \cdot g_j \in {\mathcal S}'_{B(2^{j+2})}({\mathbf R}^n)$.
Thus,
invoking Lemma \ref{lem:180812-6} and using the H\"{o}lder inequality twice,
we have
\begin{eqnarray*}
\left\|\sum_{j=0}^\infty f_j \cdot g_j\right\|_{{\mathcal N}^s_{p q r}}
&\le& C
\left(\sum_{j=0}^\infty (2^{j s}\|f_j \cdot g_j\|_{{\mathcal M}^p_q})^r\right)^{\frac1r}\\
&\le&
C
\left(
\sum_{j=0}^\infty (2^{j s_1}\|f_j\|_{{\mathcal M}^{p_1}_{q_1}})^{r_1}
\right)^{\frac1{r_1}}
\left(
\sum_{j=0}^\infty (2^{j s_2}\|g_j\|_{{\mathcal M}^{p_2}_{q_2}})^{r_2}
\right)^{\frac1{r_2}}.
\end{eqnarray*}
\end{proof}

\subsection{Conclusion of the proof of Theorem \ref{thm:main1}}

We prove Theorem \ref{thm:main1} as follows:
If we use Lemma \ref{lem:180812-5},
then we have
\begin{eqnarray*}
\|f \preceq g\|_{{\mathcal N}^{s}_{p q r}}
&=&
\left\|\sum_{j=2}^\infty \psi_{j-2}(D)f \cdot \varphi_j(D)g\right\|_{{\mathcal N}^{s}_{p q r}}\\
&\le& C
\sup_{j \in {\mathbf N}_0}
\|\psi_{j}(D)f\|_{{\mathcal M}^{p_1}_{q_1}}
\left(
\sum_{j=2}^\infty 
(2^{j s}\|g_j\|_{{\mathcal M}^{p_2}_{q_2}})^r
\right)^{\frac1r}.
\end{eqnarray*}
Since $\psi_{j}(D)f=(2\pi)^{-\frac{n}{2}}{\mathcal F}^{-1}\psi_j*f$
and ${\mathcal F}^{-1}\psi_j=2^{j n}{\mathcal F}^{-1}\psi(2^j \cdot)$,
we have
\begin{eqnarray*}
\|\psi_{j}(D)f\|_{{\mathcal M}^{p_1}_{q_1}}
&=&(2\pi)^{-\frac{n}{2}}\|{\mathcal F}^{-1}\psi_j*f\|_{{\mathcal M}^{p_1}_{q_1}}\\
&\le&(2\pi)^{-\frac{n}{2}}\|{\mathcal F}^{-1}\psi_j\|_{L^1}\|f\|_{{\mathcal M}^{p_1}_{q_1}} \\
&=&(2\pi)^{-\frac{n}{2}}\|{\mathcal F}^{-1}\psi\|_{L^1}\|f\|_{{\mathcal M}^{p_1}_{q_1}}.
\end{eqnarray*}
Thus,
\begin{eqnarray*}
\|f \preceq g\|_{{\mathcal N}^{s}_{p q r}}
&\le& C
\|f\|_{{\mathcal M}^{p_1}_{q_1}}
\left(
\sum_{j=2}^\infty 
(2^{j s}\|g_j\|_{{\mathcal M}^{p_2}_{q_2}})^r
\right)^{\frac1r}.
\end{eqnarray*}
Recall that $s>0$.
Since
${\mathcal M}^{p_1}_{q_1}({\mathbf R}^n)
\supset {\mathcal N}^{s}_{p_1q_1r}({\mathbf R}^n)$,
we have
\begin{eqnarray*}
\|f \preceq g\|_{{\mathcal N}^{s}_{p q r}}
&\le& C
\|f\|_{{\mathcal N}^s_{p_1q_1r}}
\|g\|_{{\mathcal N}^s_{p_2q_2r}}.
\end{eqnarray*}
Likewise
\begin{eqnarray*}
\|f \succeq g\|_{{\mathcal N}^{s}_{p q r}}
&\le& C
\|f\|_{{\mathcal N}^s_{p_1q_1r}}
\|g\|_{{\mathcal N}^s_{p_2q_2r}}.
\end{eqnarray*}

Meanwhile,
we have
\begin{eqnarray*}
\|f \odot g\|_{{\mathcal N}^{s}_{p q r}}
\le
\|f \odot g\|_{{\mathcal N}^{2s}_{p q r}}
\le C
\|f\|_{{\mathcal N}^{s}_{p_1 q_1 2r}}
\|g\|_{{\mathcal N}^{s}_{p_2 q_2 2r}}
\le C
\|f\|_{{\mathcal N}^{s}_{p_1 q_1 r}}
\|g\|_{{\mathcal N}^{s}_{p_2 q_2 r}}
\end{eqnarray*}
by Corollary \ref{cor:180812-2}.

Putting together these observations, we obtain the desired result.

\section{Commutator estimate}\label{s4}

We recall the following lemma obtained in \cite[Lemma 2.2]{GIP15}:
\begin{lemma}\label{lem:180812-1}
Let $0<\alpha \le 1$, $j \in {\mathbf N}_0$,
and let
$F \in {\rm Lip}^\alpha({\mathbf R}^n)$,
$G \in L^\infty({\mathbf R}^n)$.
Then
\[
\|\varphi_j(D)[F \cdot G]-F \varphi_j(D)G\|_{L^\infty}
\le C
2^{-j\alpha}
\|F\|_{{\rm Lip}^\alpha}
\|G\|_{L^\infty}.
\]
\end{lemma}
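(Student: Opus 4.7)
The plan is to realize both terms inside the norm as convolution integrals against $\mathcal{F}^{-1}\varphi_j$ and exploit the cancellation structure. First I would write, using the convolution formula $\varphi_j(D)h = (2\pi)^{-n/2} \mathcal{F}^{-1}\varphi_j * h$,
\[
\varphi_j(D)[F\cdot G](x) - F(x)\varphi_j(D)G(x)
= (2\pi)^{-n/2}\!\int_{\mathbf{R}^n} \mathcal{F}^{-1}\varphi_j(x-y)\bigl(F(y)-F(x)\bigr)G(y)\,dy,
\]
the point being that the factor $F(x)$ can be moved inside the convolution integral so that $G$ plays the role of a bounded bystander and only the difference $F(y)-F(x)$ survives.

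Next I would apply the Lipschitz estimate $|F(y)-F(x)|\le \|F\|_{\mathrm{Lip}^\alpha}|x-y|^\alpha$ together with the bound $|G(y)|\le \|G\|_{L^\infty}$ to reduce the problem to
\[
\|\varphi_j(D)[F\cdot G]-F\varphi_j(D)G\|_{L^\infty}
\le C\|F\|_{\mathrm{Lip}^\alpha}\|G\|_{L^\infty}
\int_{\mathbf{R}^n}|\mathcal{F}^{-1}\varphi_j(z)|\,|z|^\alpha\,dz.
\]
The remaining task is to show that the last integral is bounded by $C\,2^{-j\alpha}$.

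For $j\ge 1$, I would use the scaling identity $\varphi_j(\xi)=\varphi_1(2^{-(j-1)}\xi)$ coming from the definition, which yields $\mathcal{F}^{-1}\varphi_j(z)=2^{(j-1)n}\,\mathcal{F}^{-1}\varphi_1(2^{j-1}z)$. Changing variables $u=2^{j-1}z$ gives
\[
\int_{\mathbf{R}^n}|\mathcal{F}^{-1}\varphi_j(z)|\,|z|^\alpha\,dz
= 2^{-(j-1)\alpha}\int_{\mathbf{R}^n}|\mathcal{F}^{-1}\varphi_1(u)|\,|u|^\alpha\,du,
\]
and since $\mathcal{F}^{-1}\varphi_1\in\mathcal{S}(\mathbf{R}^n)$ the latter integral is a finite constant, giving the desired $2^{-j\alpha}$ decay. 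The case $j=0$ is handled separately and trivially, estimating each of $\varphi_0(D)[F\cdot G]$ and $F\varphi_0(D)G$ in $L^\infty$ by $C\|F\|_{L^\infty}\|G\|_{L^\infty}\le C\|F\|_{\mathrm{Lip}^\alpha}\|G\|_{L^\infty}$ via Young's inequality, with $2^{-0\cdot\alpha}=1$.

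I do not anticipate any serious obstacle here: the identity in the first step is the essential algebraic observation, and the remainder is a scaling computation. The only minor point to keep straight is that the cancellation used to gain the factor $|z|^\alpha$ comes entirely from the Lipschitz regularity of $F$, not from any moment condition on $\varphi_j$, so the $j=0$ case does not require special structure on $\varphi_0$ beyond the $L^1$ estimate for $\mathcal{F}^{-1}\varphi_0$.
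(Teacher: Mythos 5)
Your proposal is correct and follows essentially the same route as the paper: rewrite the commutator as a convolution against $\mathcal{F}^{-1}\varphi_j$ with the difference $F(y)-F(x)$ inside, apply the Lipschitz bound, and extract the factor $2^{-j\alpha}$ from the scaling of $\varphi_j$. Your treatment is in fact slightly more careful than the paper's, since you make the dilation identity $\varphi_j=\varphi_1(2^{-(j-1)}\cdot)$ explicit and handle $j=0$ separately.
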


This is a slight extension
of \cite[Lemma 2.2]{GIP15} to the case where $\alpha=1$.
Here for the sake of convenience for readers,
we recall the whole proof.

\begin{proof}
Since $\varphi_j(D)H(x)=(2\pi)^{-\frac{n}{2}}{\mathcal F}^{-1}\varphi_j*H(x)$
for all $H \in {\mathcal S}'({\mathbf R}^n)$
which grows polynomially at infinity,
\begin{eqnarray*}
\lefteqn{
\varphi_j(D)[F \cdot G](x)-F(x) \varphi_j(D)G(x)
}\\
&=&(2\pi)^{-\frac{n}{2}}
\int_{{\mathbf R}^n}
2^{j n}{\mathcal F}^{-1}\varphi(2^j(x-y))(F(y)-F(x))G(y)\,dy.
\end{eqnarray*}
As a result,
letting
\[
C=(2\pi)^{-\frac{n}{2}}\int_{{\mathbf R}^n}
|z|^\alpha|{\mathcal F}^{-1}\varphi(z)|\,dz,
\]
we have
\[
\|\varphi_j(D)[F \cdot G]-F \varphi_j(D)G\|_{L^\infty}
\le C
2^{-j\alpha}
\|F\|_{{\rm Lip}^\alpha}
\|G\|_{L^\infty},
\]
as required.
\end{proof}

\begin{lemma}\label{lem:180812-7}
Let $0<\alpha \le 1$, $j \in {\mathbf N}_0$,
and let
$F \in {\rm Lip}^\alpha({\mathbf R}^n)$,
$G \in L^\infty({\mathbf R}^n)$.
Then we have
\[
\|\varphi_j(D)[F \preceq G]-F \varphi_j(D)G\|_{L^\infty}
\le C
2^{-j\alpha}\|F\|_{{\rm Lip}^\alpha}\|G\|_{L^\infty}.
\]
\end{lemma}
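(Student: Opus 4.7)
The plan is to reduce $\varphi_j(D)[F \preceq G] - F \varphi_j(D) G$ to a sum of $O(1)$ many local commutators, each controlled by Lemma~\ref{lem:180812-1}, plus an approximate-identity error. Expanding
\[
\varphi_j(D)[F \preceq G] = \sum_{k=2}^{\infty} \varphi_j(D)[\psi_{k-2}(D) F \cdot \varphi_k(D) G],
\]
I would apply Example~\ref{ex:180812-1}(ii) (with its indices $k, l$ swapped) to see that each dyadic piece $\psi_{k-2}(D) F \cdot \varphi_k(D) G$ has Fourier transform supported in $\overline{B(2^{k+1})} \setminus B(2^k/8)$. Hence $\varphi_j(D)$ annihilates every summand outside the finite window $k - 4 \le j \le k + 2$, leaving only $O(1)$ surviving terms.

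On each surviving summand I would apply Lemma~\ref{lem:180812-1} to the pair $(\psi_{k-2}(D) F, \varphi_k(D) G)$. Convolution with the fixed $L^1$ kernel $(2\pi)^{-n/2}{\mathcal F}^{-1}\psi_{k-2}$ preserves both the sup-norm and the $\alpha$-H\"{o}lder seminorm, so $\|\psi_{k-2}(D) F\|_{{\rm Lip}^\alpha} \le C \|F\|_{{\rm Lip}^\alpha}$ and $\|\varphi_k(D) G\|_{L^\infty} \le C \|G\|_{L^\infty}$. The lemma then replaces $\varphi_j(D)[\psi_{k-2}(D) F \cdot \varphi_k(D) G]$ by $\psi_{k-2}(D) F \cdot \varphi_j(D) \varphi_k(D) G$ modulo an error of $C 2^{-j\alpha} \|F\|_{{\rm Lip}^\alpha} \|G\|_{L^\infty}$ per term, and the $O(1)$-size sum preserves this estimate.

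To identify $\sum_k \psi_{k-2}(D) F \cdot \varphi_j(D) \varphi_k(D) G$ with $F \varphi_j(D) G$, I would write $\psi_{k-2}(D) F = F - (I - \psi_{k-2}(D)) F$. The Littlewood--Paley identity $\sum_{k=0}^\infty \varphi_k = 1$ combined with $\varphi_j \varphi_k = 0$ for $|j-k| \ge 3$ yields (for $j$ large enough, say $j \ge 4$) the telescoping $\sum_k \varphi_j(D) \varphi_k(D) G = \varphi_j(D) G$. The complementary remainder $\sum_k (I - \psi_{k-2}(D)) F \cdot \varphi_j(D) \varphi_k(D) G$ is then estimated using the approximate-identity bound $\|(I - \psi_{k-2}(D)) F\|_{L^\infty} \le C 2^{-k\alpha} \|F\|_{{\rm Lip}^\alpha}$, obtained by writing $F(x) - \psi_{k-2}(D) F(x) = \int K_{k-2}(y)(F(x) - F(x-y))\,dy$ with $\int K_{k-2} = 1$. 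Since $k \approx j$ on the $O(1)$ surviving indices, the full remainder is bounded by $C 2^{-j\alpha} \|F\|_{{\rm Lip}^\alpha} \|G\|_{L^\infty}$. The finitely many small values $j \le 3$ are disposed of by the crude bound $\|F\|_{L^\infty} \|G\|_{L^\infty} \le \|F\|_{{\rm Lip}^\alpha}\|G\|_{L^\infty}$, since $2^{-j\alpha}$ is then bounded below.

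The main obstacle will be the bookkeeping in the third step: reconciling the Fourier-support window $k - 4 \le j \le k + 2$ arising from Example~\ref{ex:180812-1}, the Littlewood--Paley support window $|j - k| \le 2$, and the paraproduct's starting index $k \ge 2$, so that no boundary contribution is lost when the telescoping identity is invoked.
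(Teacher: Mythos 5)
Your argument is correct, and while it rests on the same workhorse as the paper --- Lemma \ref{lem:180812-1} --- it organizes the error terms by a genuinely different decomposition. The paper first projects $G$ onto the blocks $\varphi_k(D)G$ with $|k-j|\le 3$, rewrites $F\preceq\varphi_k(D)G$ as the full product $F\cdot\varphi_k(D)G$ minus the remaining Bony pieces, applies Lemma \ref{lem:180812-1} to the pair $(F,\varphi_k(D)G)$, and kills the leftover $\varphi_j(D)[F\succeq\varphi_k(D)G]$ using the block estimate $\|\varphi_l(D)F\|_{L^\infty}\le C2^{-l\alpha}\|F\|_{{\rm Lip}^\alpha}$ for $l$ near $j$. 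You instead expand $F\preceq G$ directly into its defining blocks $\psi_{k-2}(D)F\cdot\varphi_k(D)G$, apply Lemma \ref{lem:180812-1} to the pair $(\psi_{k-2}(D)F,\varphi_k(D)G)$ --- legitimate, since the convolution kernel ${\mathcal F}^{-1}\psi_{k-2}$ has $k$-independent $L^1$ norm and hence preserves the ${\rm Lip}^\alpha$ norm --- and then pay for replacing $\psi_{k-2}(D)F$ by $F$ with the approximate-identity bound $\|(I-\psi_{k-2}(D))F\|_{L^\infty}\le C2^{-k\alpha}\|F\|_{{\rm Lip}^\alpha}$. The two error terms are morally the same object (the high-frequency tail of $F$ tested against a frequency block of $G$), but your version controls the whole tail at once through the kernel moment $\int|z|^\alpha|{\mathcal F}^{-1}\psi(z)|\,{\rm d}z$ rather than block by block; a small bonus is that this works verbatim at $\alpha=1$, where the Littlewood--Paley characterization of ${\rm Lip}^1$ is unavailable. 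Two points to keep clean in the write-up: the support inclusion (\ref{eq:15.13}) is here applied to products of two band-limited $L^\infty$ functions, the same harmless extension of Corollary \ref{cor:180812-1} that the paper itself uses elsewhere; and the telescoping $\sum_k\varphi_j(D)\varphi_k(D)G=\varphi_j(D)G$ needs $j$ large enough that the paraproduct's starting index $k\ge 2$ does not truncate the window $|k-j|\le 1$ --- exactly the boundary issue you flag, and your crude bound for small $j$ disposes of it.
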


This is also a slight extension
of \cite[Lemma 2.3]{GIP15} to the case where $\alpha=1$.
Here for the sake of convenience for the readers
we supply the proof.

\begin{proof}
We assume $j \gg 1$;
otherwise we can mimic the argument below
and we can readily incorporate the case where $j$ is not so large.
We decompose
\begin{eqnarray*}
\lefteqn{
\varphi_j(D)[F \preceq G]-F \varphi_j(D)G
}\\
&=&
\sum_{k=j-3}^{j+3}\left(\varphi_j(D)[F \preceq \varphi_k(D)G]-F \varphi_j(D)\varphi_k(D)G\right)\\
&=&
\sum_{k=j-3}^{j+3}\left(\varphi_j(D)[F \cdot \varphi_k(D)G]-F \varphi_j(D)\varphi_k(D)G
-\varphi_j(D)[F \succeq \varphi_k(D)G]\right).
\end{eqnarray*}
Let $k$ be fixed.
We use Lemma \ref{lem:180812-1} to have
\[
\|\varphi_j(D)[F \cdot \varphi_k(D)G]-F \varphi_j(D)\varphi_k(D)G\|_{L^\infty}
\le C2^{-j\alpha}
\|F\|_{{\rm Lip}^\alpha}\|G\|_{L^\infty}.
\]
Meanwhile, using
\[
\varphi_j(D)[F \succeq \varphi_k(D)G]
=
\sum_{l=j-5}^{j+5}\varphi_j(D)[\varphi_l(D)F \succeq \varphi_k(D)G]
\]
for $k \in [j-3,j+3]$,
we have
\begin{eqnarray*}
\|\varphi_j(D)[F \succeq \varphi_k(D)G]\|_{L^\infty}
&\le&
\sum_{l=j-5}^{j+5}\|\varphi_j(D)[\varphi_l(D)F \succeq \varphi_k(D)G]\|_{L^\infty}\\
&\le& C
\sum_{l=j-5}^{j+5}\|\varphi_l(D)F \succeq \varphi_k(D)G\|_{L^\infty}\\
&\le& C\sup_{l,l' \in {\mathbf N}_0}
\|\varphi_l(D)F \cdot \varphi_{l'}(D)G\|_{L^\infty}\\
&\le& C
2^{-j\alpha}\|F\|_{{\rm Lip}^\alpha}\|G\|_{L^\infty}.
\end{eqnarray*}
\end{proof}

We prove Theorem \ref{thm:main2}
to conclude this note.

\begin{proof}
We decompose
\begin{eqnarray*}
(f \preceq g) \odot h-f(g \odot h)
&=&
\sum_{j=0}^\infty 
[\varphi_j(D)[f \preceq g]
-
f\cdot\varphi_j(D)g] \varphi_j(D)h\\
&&+
\sum_{j=1}^\infty 
(\varphi_{j-1}(D)[f \preceq g] -
f\varphi_{j-1}(D)g) \varphi_j(D)h\\
&&+
\sum_{j=1}^\infty 
[\varphi_j(D)[f \preceq g]
-
f\cdot\varphi_j(D)g] \varphi_{j-1}(D)h.
\end{eqnarray*}
We handle the first term;
other two terms are dealt with similarly.
We decompose
\begin{eqnarray*}
\lefteqn{
\sum_{j=0}^\infty 
[\varphi_j(D)[f \preceq g]
-
f\cdot\varphi_j(D)g] \varphi_j(D)h
}\\
&=&
\sum_{j=0}^\infty 
[\varphi_j(D)[\psi_{j+4}(D)f \preceq g]
-
\psi_{j+4}(D)f\cdot\varphi_j(D)g] \varphi_j(D)h\\
&&+\sum_{j=0}^\infty \sum_{k=j+5}^\infty
\varphi_j(D)[\varphi_k(D)f \preceq g] \cdot \varphi_j(D)h\\
&&-\sum_{j=0}^\infty \sum_{k=j+5}^\infty
\varphi_k(D)f\cdot\varphi_j(D)g \cdot \varphi_j(D)h.
\end{eqnarray*}
Since
\[
\|\partial^m[\varphi_j(D)[\psi_{j+4}(D)f \preceq g]
-
\psi_{j+4}(D)f\cdot\varphi_j(D)g]\|_{L^\infty}
=
{\rm O}(2^{-j(\alpha+\beta-|m|)})
\]
for all $m=(m_1,m_2,\ldots,m_n) \in {\mathbf N}_0{}^n$,
we have
\begin{eqnarray*}
&&\left\|\sum_{j=0}^\infty 
[\varphi_j(D)[\psi_{j+4}(D)f \preceq g]
-
\psi_{j+4}(D)f\cdot\varphi_j(D)g] \varphi_j(D)h\right\|_{{\mathcal N}^{s+\alpha+\beta}_{p q r}}
\\
&&\le C
\|f\|_{{\rm Lip}^\alpha}
\|g\|_{{\mathcal C}^\beta}
\|h\|_{{\mathcal N}^s_{p q r}}.
\end{eqnarray*}

Using Example \ref{ex:180812-1},
we estimate
the second term:
\begin{eqnarray*}
&&\sum_{j=0}^\infty \sum_{k=j+5}^\infty
\varphi_j(D)[\varphi_k(D)f \preceq g] \cdot \varphi_j(D)h\\
&&=
\sum_{j=0}^\infty \sum_{k=j+5}^\infty
\varphi_j(D)[\varphi_k(D)\psi_{k-2}(D)f \cdot\varphi_k(D)g] \cdot \varphi_j(D)h\\
&&\quad+
\sum_{j=0}^\infty \sum_{k=j+5}^\infty
\varphi_j(D)[\varphi_k(D)\psi_{k-1}(D)f \cdot\varphi_{k+1}(D)g] \cdot \varphi_j(D)h.
\end{eqnarray*}
Next, we note that
\begin{eqnarray*}
&&\|\varphi_k(D)f \cdot \varphi_j(D)g \cdot \varphi_j(D)h\|_{{\mathcal M}^p_q}\\
&&\le C
2^{-k\alpha-j(s+\beta)}
\|f\|_{{\rm Lip}^\alpha}
\|g\|_{{\mathcal C}^\beta}
\|2^{j s}\varphi_j(D)h\|_{{\mathcal M}^p_q}.
\end{eqnarray*}
Adding this estimate over $j,k$,
we have
\begin{eqnarray*}
&&
\left\{
\sum_{k=5}^\infty
\left(2^{k(s+\alpha+\beta)}\left\|\sum_{j=0}^{k-5}\varphi_k(D)f \cdot \varphi_j(D)g \cdot \varphi_j(D)h\right\|_{{\mathcal M}^p_q}\right)^r
\right\}^{\frac1r}\\
&&\le C
\left\{
\sum_{k=5}^\infty
\left(
\sum_{j=0}^{k-5}2^{(k-j)(s+\beta)}
\|f\|_{{\rm Lip}^\alpha}
\|g\|_{{\mathcal C}^\beta}
\|2^{j s}\varphi_j(D)h\|_{{\mathcal M}^p_q}
\right)^r
\right\}^{\frac1r}\\
&&= C
\left\{
\sum_{k=5}^\infty
\sum_{j=0}^{k-5}
\left(
2^{\frac12(k-j)(s+\beta)}
\|f\|_{{\rm Lip}^\alpha}
\|g\|_{{\mathcal C}^\beta}
\|2^{j s}\varphi_j(D)h\|_{{\mathcal M}^p_q}
\right)^r
\right\}^{\frac1r}\\
&&= C
\left\{
\sum_{j=0}^\infty
\left(
\|f\|_{{\rm Lip}^\alpha}
\|g\|_{{\mathcal C}^\beta}
\|2^{j s}\varphi_j(D)h\|_{{\mathcal M}^p_q}
\right)^r
\right\}^{\frac1r}\\
&&=C\|f\|_{{\rm Lip}^\alpha}
\|g\|_{{\mathcal C}^\beta}
\|h\|_{{\mathcal N}^s_{p q r}}.
\end{eqnarray*}
\end{proof}

\section{Acknowledgement}

The author thankful to Professor Alexey Karapetyants
for his inviting me to the conference OTHA 2018.
The author is also thankful
to Professors Yuzuru Inahama and Masato Hoshino
for their encouragement to write this note.

%
%

Department of Mathematical Science,\\
1-1 Minami-Ohsawa, Hachioji, 192-0397, Tokyo Japan\\
\email{yoshihiro-sawano@celery.ocn.ne.jp}

\begin{thebibliography}{6}
%

\bibitem{Bony81}
Bony, J.M.:
Calcul symbolique et propagation des singularit\'{e}s
pour les \'{e}quations aux d\'{e}riv\'{e}es partielles
non lin\'{e}aires. (French),
[Symbolic calculus and propagation of singularities
Quantitative analysis in Sobolev imbedding theorems for
and applications to spectral theory,
nonlinear partial differential equations]
Ann. Sci. \'{E}cole Norm. Sup. (4) {\bf 14},
no. 2, 209--246 (1981)



\bibitem{GIP15}
Gubinelli, M.,
Imkeller, P.,
Perkowski, N.:
Paracontrolled distributions and singular PDEs.
Forum Math. Pi, 3:e6, 75, 2015.

\bibitem{GIP16}
Gubinelli, M.,
Imkeller, P.,
Perkowski, N.:
A Fourier approach to pathwise stochastic integration,
Electronic J. of Probability
{\bf 21}, Number 2016, paper no. 2, 37 pp. (2016)

\bibitem{Hairer14}
Hairer, M.;
A theory of regularity structures.
Invent. Math., 198(2):269--504, (2014)

\bibitem{HaSk14}
Haroske, D.D., 
Skrzypczak, L.:
On Sobolev and Franke-Jawerth embeddings of smoothness Morrey spaces,
Rev. Mat. Complut. {\bf 27}, no. 2, 541--573 (2014)

\bibitem{KoYa94}
Kozono, H.,
Yamazaki, M.:
Semilinear heat equations and the Navier--Stokes
equation with distributions
in new function spaces as initial data.
Comm. PDE {\bf 19}, 959--1014 (1994)

\bibitem{Mazzucato01}
Mazzucato, A. L.:
Decomposition of Besov--Morrey spaces.
Harmonic analysis at Mount Holyoke
(South Hadley, MA, 2001), 279--294,
Contemp. Math. {\bf 320}, Am. Math. Soc.
Providence, RI, 2003.

\bibitem{Mazzucato02}
Mazzucato, A. L.:
Besov--Morrey spaces:
Function space theory and applications
to non-linear PDE,
Trans. Am. Math. Soc. {\bf 355}, no. 4, 1297--1364 (2003)

\bibitem{Netrusov84}
Netrusov, Y.V.:
Some imbedding theorems for spaces of Besov--Morrey 
type. 
(Russian),
Numerical methods and questions in the organization of calculations, 7.
Zap. Nauchn. Sem. Leningrad. Otdel. Mat. Inst. Steklov. 
(LOMI) {\bf 139}, 139--147 (1984)

\bibitem{Najafov05-1}
Najafov, A.M.:
Some properties of functions from the intersection
of Besov--Morrey 
type spaces with dominant mixed derivatives,
Proc. A. Razmadze Math. Inst. {\bf 139}, 71--82 (2005)


\bibitem{Rosenthal13}
Rosenthal, M.:
Local means, wavelet bases and wavelet isomorphisms
in Besov--Morrey and Triebel--Lizorkin--Morrey spaces.
Math. Nachr. {\bf 286}, no. 1, 59--87 (2013)

\bibitem{Sawano18}
Sawano, Y.,
Theory of Besov spaces,
Development in Mathematics Springer, to appear.

\bibitem{SST09}
Y. Sawano, S. Sugano and H. Tanaka,
Identification of the image of Morrey spaces 
by the fractional integral operators, 
Proc. A. Razmadze Math. Inst. {\bf 149} (2009), 87--93.

\bibitem{SaTa07}
Sawano, Y., 
Tanaka, H.:
Decompositions of Besov--Morrey spaces
and Triebel--Lizorkin--Morrey spaces.
Math. Z. {\bf 257}, no. 4, 871--905 (2007)

\bibitem{SYY10}
Sawano, Y., 
Yang, D., 
Yuan, W.:
New applications of Besov-type and Triebel--Lizorkin-type spaces.
J. Math. Anal. Appl. {\bf 363}, 73--85 (2010)

\bibitem{TaXu05}
Tang, L., 
Xu, J. S.:
Some properties of Morrey 
type Besov-Triebel spaces.
Math. Nachr. {\bf 278}, 904--917 (2005)

\bibitem{YaYu08-2}
Yang, D., 
Yuan, W.:
A new class of function spaces connecting Triebel--Lizorkin spaces
and
$Q$ spaces.
J. Funct. Anal. {\bf 255}, 2760--2809 (2008)

\bibitem{YaYu10-1}
Yang, D., 
Yuan, W.:
New Besov-type spaces and Triebel-Lizorkin-type spaces
including $Q$ spaces.
Math. Z. {\bf 265}, 451--480 (2010)


\bibitem{YSY10-2}
Yuan, W., 
Sickel, W., 
Yang, D.:
Morrey and Campanato Meet Besov, Lizorkin and Triebel.
Lecture Notes in Mathematics, 2005,
Springer-Verlag, Berlin, 2010, xi+281 pp. 

\end{thebibliography}
\end{document}